\documentclass[a4paper,11pt,reqno]{amsart}
\usepackage{amssymb}
\usepackage{amsmath,enumerate}
\usepackage{graphicx}
\usepackage{tikz}
\usetikzlibrary {arrows.meta}

\usepackage[top=30truemm,bottom=30truemm,left=20truemm,right=20truemm]{geometry}
\allowdisplaybreaks[4]

\newtheorem{Def}{Definition}[section]
\theoremstyle{remark}
\newtheorem{Rem}[Def]{Remark}

\newtheorem*{Ack}{Acknowledgments}
\theoremstyle{plain}
\newtheorem{Th}[Def]{Theorem}
\newtheorem{Prop}[Def]{Proposition}

\newtheorem{Cor}[Def]{Corollary}
\newtheorem{Fact}[Def]{Fact}

\newcommand{\Z}{\mathbb{Z}}

\newcommand{\C}{\mathbb{C}}
\renewcommand{\P}{\mathbb{P}}
\renewcommand{\H}{\mathbb{H}}

\newcommand{\CL}{\mathcal{L}}

\newcommand{\CO}{\mathcal{O}}

\newcommand{\al}{\alpha }
\newcommand{\be}{\beta }
\newcommand{\ga}{\gamma }

\newcommand{\Ga}{\Gamma }

\newcommand{\vth}{\vartheta }

\newcommand{\vph}{\varphi }
\newcommand{\vsi}{\varsigma }
\newcommand{\la}{\lambda }

\newcommand{\Om}{\Omega }
\newcommand{\si}{\sigma }
\newcommand{\na}{\nabla }

\newcommand{\ot}{\otimes }
\newcommand{\op}{\oplus }
\newcommand{\bu}{\bullet}
\newcommand{\we}{\wedge}

\newcommand{\DS}{\displaystyle }

\newcommand{\tpi}{2\pi \sqrt{-1}}
\newcommand{\pii}{\pi \sqrt{-1}}

\newcommand{\bfe}{\mathbf{e}}

\newcommand{\h}{{\rm h}}
\newcommand{\ch}{{\rm ch}}

\DeclareMathOperator{\im}{Im}
\DeclareMathOperator{\pr}{pr}

\def\tp#1{\mathord{\mathopen{{\vphantom{#1}}^t}#1}} 

\makeatletter
\@addtoreset{equation}{section}

\makeatother

\title[The Wirtinger-type integral for a genus two curve]{
  The Wirtinger-type integral for a genus two curve
}
\author[Y. Goto]{Yoshiaki Goto}
\address[Goto]{
  Otaru University of Commerce, 
  3-5-21, Midori, Otaru, Hokkaido, 047-8501, Japan
}
\email{goto@res.otaru-uc.ac.jp}
\keywords{
Wirtinger integral; 
Theta function; 
Twisted homology groups; 
Twisted cohomology groups; 
Intersection numbers.
}
\subjclass[2020]{
33C99,  
14H45,  
55N25. 
}
\date{\today}

\begin{document}
\begin{abstract}
  The Wirtinger integral is one of the integral representations of the Gauss hypergeometric function. 
  Its integrand can be regarded as a multivalued function on an elliptic curve. 
  In this paper, we study an analogue of the Wirtinger integral on a hyperelliptic curve of genus two, 
  introduced by Mizutani and Watanabe. 
  We investigate the associated twisted homology and cohomology groups using 
  the hyperelliptic involution and intersection forms. 
\end{abstract}

\maketitle

\section{Introduction}
The Gauss hypergeometric function ${}_2F_1(a,b,c;z)$ admits an integral representation 
\begin{align*}
  {}_2F_1(a,b,c;z) =\frac{\Ga (c)}{\Ga (a) \Ga (c-a)}
    \int_{0}^{1} t^{a} (1-t)^{c-a} (1-zt)^{-b} \frac{dt}{t(1-t)}.
\end{align*}
By using the Jacobi theta functions $\vth_1$, $\vth_2$, $\vth_3$, $\vth_4$ (e.g., \cite{Erdelyi}) and 
setting $t=(\vth_3(0)^2 \vth_1(u)^2)/(\vth_2(0)^2\vth_4(u)^2)$, 
we obtain another integral representation 
\begin{align}
  \nonumber
  {}_2F_1(a,b,c;\frac{\vth_2(0)^4}{\vth_3(0)^4}) 
  &=\frac{2\pi \Ga (c)}{\Ga (a) \Ga (c-a)}
    \vth_2(0)^{2-2\ga} \vth_3(0)^{2\al +2\be} \vth_4(0)^{2\ga-2\al -2\be} \\
  \label{eq:Wirtinger-integral}
  &\cdot \int_0^{\frac{1}{2}} \vth_1(u)^{2\al-1} \vth_2(u)^{2\ga-2\al-1} \vth_3(u)^{-2\be+1} \vth_4(u)^{2\be -2\ga +1} du ,
\end{align}
which is called the Wirtinger integral \cite{Wirtinger}. 
For further details, see \cite{Watanabe-elliptic-homology-cohomology}. 
Here, for simplicity, we write $\vth_1(u)$ for $\vth_1(u;\tau)$ ($u,\tau\in \C$, $\im (\tau)>0$). 
Note that the integrand is a multivalued function on the elliptic curve $\C/(\Z +\Z \tau)$. 

To study various types of hypergeometric functions, twisted homology and cohomology groups play 
an important role. 
Twisted (co)homology groups are defined as those with coefficients in local systems associated with 
multivalued functions. 
Many studies of twisted (co)homology groups use multivalued functions on projective spaces. 
In contrast, we consider a generalization of twisted (co)homology theory to non-projective spaces. 
As a first example, 
twisted homology and cohomology groups associated with the integral (\ref{eq:Wirtinger-integral}) are studied
in \cite{Watanabe-elliptic-homology-cohomology}, \cite{Watanabe-wirtinger-diff-eq}. 
Recently, in \cite{G-Shibukawa}, theses results are reconsidered and interpreted in terms of intersection forms. 

In \cite{G-Matsubara-Mitsui}, we study twisted homology and cohomology groups with an Abelian covering $p:X\to Y$. 
Using the intersection pairings, the twisted (co)homology groups of $X$ can be understood 
in terms of those of $Y$. 
The results of \cite{G-Shibukawa} for the Wirtinger integral can also be interpreted in this framework 
via the double covering $\C/(\Z +\Z \tau) \to \P^1$. 
Motivated by this observation, 
we expect to obtain similar results for hyperelliptic curves. 
Accordingly, in this paper, we consider hypergeometric integrals on a hyperelliptic curve of genus two. 

The Lauricella hypergeometric function $F_D$ of three variables admits an integral representation
\begin{align}
  \nonumber
  &F_D (a,b_1,b_2,b_3,c;z_1,z_2,z_3) \\
  \label{eq:FD-int-rep}
  &=\frac{\Ga(c)}{\Ga(a) \Ga(c-a)} \int_{1}^{\infty}
  t^{b_1+b_2+b_3-c} (t-1)^{c-a} (t-z_1)^{-b_1}(t-z_2)^{-b_2}(t-z_3)^{-b_3} \frac{dt}{t-1} .
\end{align}
As an analogue of the Wirtinger integral, we consider the pullback of this integral 
under the double covering $\bar{X} \to \P^1$, where $\bar{X}$ is a hyperelliptic curve of genus two. 
In \cite{Mizutani-Watanabe-J}, \cite{Mizutani-Watanabe-E}\footnote{
\cite{Mizutani-Watanabe-J} is written in Japanese. 
\cite{Mizutani-Watanabe-E} is its English version. 
}, 
a certain relation in the twisted homology group is studied using the Abel-Jacobi map and theta functions. 
However, the meaning of this relation from the viewpoint of twisted (co)homology theory 
does not seem to have been clarified.
Applying the results of \cite{G-Matsubara-Mitsui}, 
we study a precise structure of the twisted (co)homology groups and 
interpret the relation in terms of the intersection forms. 

Twisted (co)homology theory for punctured Riemann surfaces is studied in \cite{Watanabe-punctured}, and 
the corresponding intersection theory is investigated in \cite{Pokraka-Ren-Rodriguez}. 
Our setting is covered by the framework of \cite{Pokraka-Ren-Rodriguez}. 
However, we choose six special points as the punctures of the hyperelliptic curve, 
whereas in \cite{Pokraka-Ren-Rodriguez} the punctures are taken to be generic. 
This specific choice leads to a simpler structure of the twisted (co)homology groups. 

This paper is organized as follows. 
In Section \ref{section:preliminaries}, 
we review the Wirtinger-type integral of genus two which is defined 
in \cite{Mizutani-Watanabe-J}, \cite{Mizutani-Watanabe-E}. 
We also review basic facts on twisted homology and cohomology groups. 
In Section \ref{section:involution}, 
we apply the results of \cite{G-Matsubara-Mitsui} to study twisted (co)homology groups. 
Using the hyperelliptic involution, we decompose these groups into eigenspaces, 
and show that this decomposition is ``compatible'' with the intersection forms. 
In Sections \ref{section:cohomology} and \ref{section:homology}, 
we construct suitable bases of twisted (co)homology groups consisting of eigenvectors, 
and compute their intersection matrices. 
In particular, in Section \ref{section:homology}, 
the construction of the basis relies on the results of
\cite{Mizutani-Watanabe-J}, \cite{Mizutani-Watanabe-E}, which are obtained using 
theta functions. 
Finally, by combining the decomposition with the intersection forms, 
we reinterpret and generalize the main result 
of \cite{Mizutani-Watanabe-J}, \cite{Mizutani-Watanabe-E}.

\section{Preliminaries}\label{section:preliminaries}
\subsection{Hyperelliptic curve of genus two}
Let $\bar{X}$ be a hyperelliptic curve of genus two defined by 
\begin{align*}
  y^2=x(x-1)(x-z_1)(x-z_2)(x-z_3), 
\end{align*}
where $z_1,z_2,z_3 \neq 0,1$ are pairwise distinct. We set 
\begin{align*}
  P_0=(0,0),\ P_1=(z_1,0),\ P_2=(z_2,0),\ P_3=(z_3,0),\ P_4=(1,0)
\end{align*}
and write $P_5$ for the point at infinity. 
We consider two varieties 
\begin{align*}
  X:=\bar{X}-\{ P_0,\dots ,P_5 \} ,\quad 
  Y:=\P_t^1 -\{ 0,z_1,z_2,z_3,1,\infty \} , 
\end{align*}
which have an unramified covering and an involution
\begin{align*}
  \pr:X\ni (x,y)\mapsto x\in Y ,\quad 
  \iota :X \ni (x,y)\mapsto (x,-y)\in X ,
\end{align*}
respectively. 
By taking the pullback of (\ref{eq:FD-int-rep}) via $\pr$, we obtain an integral
\begin{align}
  \label{eq:Wint-x}
  \int_{\text{cycle}}
  x^{b_1+b_2+b_3-c} (x-1)^{c-a} (x-z_1)^{-b_1}(x-z_2)^{-b_2}(x-z_3)^{-b_3} \frac{dx}{x-1}
\end{align}
on $X$, which we call the \emph{Wirtinger-type integral of genus 2}. 
For simplicity, we set 
\begin{align*}
  &z_0:=0,& && && && &z_4:=1,& &z_5:=\infty,& \\
  &c_0 :=\frac{b_1+b_2+b_3-c}{2} ,&
  &c_1 :=\frac{-b_1}{2},&
  &c_2 :=\frac{-b_2}{2},&
  &c_3 :=\frac{-b_3}{2},&
  &c_4 :=\frac{c-a}{2},&
  &c_5 :=\frac{a}{2}.
\end{align*}
Then the integrand of (\ref{eq:FD-int-rep}) is written as 
$T:=\prod_{j=0}^4(x-z_j)^{2c_j}$. 
Throughout this paper, we assume 
\begin{align*}
  4c_j \not\in \Z \qquad (j=0,\dots ,5). 
\end{align*}

\subsection{Expression by theta functions}
Using theta functions and the Abel-Jacobi map, we rewrite the integral (\ref{eq:FD-int-rep}). 
Following \cite{Mizutani-Watanabe-J}, \cite{Mizutani-Watanabe-E}, 
we briefly review this fact. 
For $z=(z_1,z_2)\in \C^2$, $a=(a_1,a_2),b=(b_1,b_2)\in \{ 0,1 \}^2$ 
and $\tau \in \H_2:=\{ \tau \in M(2;\C) \mid \tp{\tau} =\tau ,\ \im(\tau)>0 \}$, 
we set
\begin{align*}
  \vth\begin{bmatrix}a \\ b\end{bmatrix} (z;\tau)
  =\sum_{n\in \Z^2} \bfe\left( \frac{1}{2}\left(n+ \frac{a}{2} \right) \tau \tp{\left( n+ \frac{a}{2} \right) }
  +\left(n+ \frac{a}{2} \right) \tp{\left( z+ \frac{b}{2} \right) } \right) ,
\end{align*}
where $\bfe(u)=e^{\tpi u}$. 

We take a symplectic basis $\{ \la_1,\la_2,\la_3,\la_4 \}$ of $H_1(\bar{X};\Z)$ such that 
$\la_1 \cdot \la_3 =\la_2 \cdot \la_4 =1$, $\la_i \cdot \la_j =0$ ($|i-j|\neq 2$) where 
$\la_i \cdot \la_j$ means the intersection number. 
We also choose holomorphic 1-forms $\zeta_1$, $\zeta_2$ such that 
\begin{align*}
  \left( \int_{\la_i} \zeta_j \right)_{i,j=1,2} =E_2 ,\quad 
  \tau :=\left( \int_{\la_{i+2}} \zeta_j \right)_{i,j=1,2} \in \H_2 ,
\end{align*}
where $E_2$ denotes the $2\times 2$ identity matrix. 
The Abel-Jacobi map $AJ$ is defined by 
\begin{align*}
  AJ: \bar{X} \to \C^2 /(\Z^2 +\tau \Z^2);\quad 
  P \mapsto \left( \int_{P_0}^{P} \zeta_1 , \int_{P_0}^{P} \zeta_2 \right) .
\end{align*}
We set 
$\DS 
\begin{Bmatrix}a_1 & a_2 \\ b_1 & b_2\end{Bmatrix}
=\begin{Bmatrix}a_1 & a_2 \\ b_1 & b_2\end{Bmatrix}(P)
:=\vth\begin{bmatrix}a_1 & a_2 \\ b_1 & b_2 \end{bmatrix} (AJ(P);\tau)$. 
Then the integrand $T$ of (\ref{eq:FD-int-rep}) is a constant multiple of\footnote{
In \cite{Mizutani-Watanabe-J}, \cite{Mizutani-Watanabe-E}, 
the point $P_2$ is specified as $P_2=(1,0)\in \bar{X}$. 
Though it is different from our setting ($P_4=(1,0)$), this difference does not effect the Abel-Jacobi map. 
Thus, we can use the same expression as \cite{Mizutani-Watanabe-J}, \cite{Mizutani-Watanabe-E}.
} 
\begin{align}
  \nonumber
  &\begin{Bmatrix}0&0 \\ 0&0\end{Bmatrix}^{c_2+c_4}
    \begin{Bmatrix}0&1 \\ 0&0\end{Bmatrix}^{c_3+c_4}
    \begin{Bmatrix}0&0 \\ 0&1\end{Bmatrix}^{c_2+c_3}
    \begin{Bmatrix}1&0 \\ 0&0\end{Bmatrix}^{c_3+c_5} 
    \begin{Bmatrix}0&0 \\ 1&0\end{Bmatrix}^{c_1+c_3} \\
  \nonumber
    & \cdot 
    \begin{Bmatrix}1&1 \\ 0&0\end{Bmatrix}^{c_2+c_5}
    \begin{Bmatrix}1&0 \\ 1&0\end{Bmatrix}^{c_0+c_3}
    \begin{Bmatrix}0&0 \\ 1&1\end{Bmatrix}^{c_1+c_4}
    \begin{Bmatrix}1&0 \\ 0&1\end{Bmatrix}^{c_4+c_5}
    \begin{Bmatrix}0&1 \\ 1&0\end{Bmatrix}^{c_1+c_2} \\
  \label{eq:theta-expression}
    & \cdot 
    \begin{Bmatrix}1&1 \\ 1&0\end{Bmatrix}^{c_0+c_2}
    \begin{Bmatrix}1&1 \\ 0&1\end{Bmatrix}^{c_0+c_1}
    \begin{Bmatrix}0&1 \\ 1&1\end{Bmatrix}^{c_0+c_5}
    \begin{Bmatrix}1&0 \\ 1&1\end{Bmatrix}^{c_0+c_4}
    \begin{Bmatrix}1&1 \\ 1&1\end{Bmatrix}^{c_1+c_5} .
\end{align}

\subsection{Twisted homology and cohomology groups}
Let $T_0$ be a multivalued function on $Y$ defined by $T_0=\prod_{j=0}^4(t-z_j)^{2c_j}$. 
We define local systems $\CL, \CL^{\vee}$ on $X$ and $\CL_0, \CL_0^{\vee}$ on $Y$ by
\begin{align*}
  \CL :=\C T,\ \CL^{\vee}:=\C T^{-1}, \ 
  \CL_0 :=\C T_0, \ \CL_0^{\vee} :=\C T_0^{-1} .
\end{align*}
Note that the local monodromy of $T$ around $P_j$ is given by $\bfe(4c_j)=e^{8\pii c_j}$. 
We also define connections $\na,\na^{\vee}:\CO_X \to \Om_X^1$ and $\na_0,\na_0^{\vee}:\CO_Y \to \Om_Y^1$ by 
\begin{align*}
  \na :=d+d\log T \we, \ \na^{\vee} :=d-d\log T \we ,\ 
  \na_0 :=d+d\log T_0 \we, \ \na_0^{\vee} :=d-d\log T_0 \we . 
\end{align*}
We note that $\CL$ is expressed as $\CL=\ker (\na^{\vee})$. 
We consider the twisted homology groups and twisted de Rham cohomology groups\footnote{
  As in \cite{AK} and \cite{Watanabe-punctured}, the cohomology group with coefficients in 
  the local system $\CL=\ker (\na^{\vee})$ is isomorphic to the twisted de Rham cohomology group $H^k(X;\na)$. 
}: 
\begin{align*}
  &H_k(X;\CL) ,\ H_k(X;\CL^{\vee}),\  H_k(Y;\CL_0) ,\  H_k(Y;\CL_0^{\vee}) ,\\
  &H^k(X;\na) ,\ H^k(X;\na^{\vee}),\  H^k(Y;\na_0) ,\  H^k(Y;\na_0^{\vee}) .  
\end{align*}
For details, see e.g. \cite{AK} and \cite{Watanabe-punctured}. 
For example, we have $H^1 (X;\na) = \Om^1_X(X)/\na \CO_X(X)$. 
\begin{Fact}[\cite{AK}, \cite{Watanabe-punctured}]\label{fact:vanish-dim}
  Under the assumption $4c_j\not\in \Z$ for $j=0,\dots,5$,  
  the twisted homology and cohomology groups vanish except in degree one. 
  We have 
  \begin{align*}
    &\dim H_1(X;\CL) =\dim H_1(X;\CL^{\vee}) =\dim H^1(X;\na)=\dim H^1(X;\na^{\vee}) =8,\\
    &\dim H_1(Y;\CL_0) =\dim H_1(Y;\CL_0^{\vee}) =\dim H^1(Y;\na_0)=\dim H^1(Y;\na_0^{\vee}) =4.
  \end{align*}
\end{Fact}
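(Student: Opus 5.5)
The argument has two parts: the vanishing outside degree one, and then an Euler characteristic count for the dimensions. The space $X$ is a compact Riemann surface of genus $2$ with six points removed, so it is a non-compact connected surface and is homotopy equivalent to a finite wedge of circles. Its Euler characteristic is
\begin{align*}
  \chi(X)=2-2\cdot 2-6=-6 .
\end{align*}
Equivalently, since $\pr:X\to Y$ is an unramified double covering, $\chi(X)=2\chi(Y)=2(2-6)=-6$. Likewise $Y$ is $\P^1$ minus six points, so $\chi(Y)=-4$.

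\emph{Vanishing.} Because $X$ and $Y$ have the homotopy type of one-dimensional CW complexes, $H_k$ vanishes for $k\ge 2$. The only nontrivial point is $H_0$. For a rank-one local system $\CL$ on a connected space, $H_0(X;\CL)$ is the coinvariants of the monodromy representation, and it vanishes as soon as some loop has nontrivial monodromy. A small loop around $P_j$ has monodromy $\bfe(4c_j)$, which is $\neq 1$ because $4c_j\notin\Z$. Hence $H_0(X;\CL)=H_0(X;\CL^{\vee})=0$. On $Y$ the local monodromy of $T_0$ around $z_j$ is $\bfe(2c_j)$, and $2c_j\notin\Z$ follows from $4c_j\notin\Z$, so $H_0(Y;\CL_0)=H_0(Y;\CL_0^{\vee})=0$ as well.

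\emph{Dimensions.} The Euler characteristic of a rank-one local system equals $\chi$ of the space, so
\begin{align*}
  \dim H_1(X;\CL)=-\chi(X)=6 .
\end{align*}
This does not match the stated $8$, so the count has to be rechecked: the statement claims $\dim H_1(X;\CL)=8$ and $\dim H_1(Y;\CL_0)=4$, while the formula $-\chi(Y)=4$ agrees only for $Y$. For $X$, the genus $2$ curve minus six points has $\chi=-2-6=-8$, not $-6$; the earlier line $2-2\cdot 2-6=-6$ was a miscomputation, since $2-4-6=-8$. As a cross-check, $2\chi(Y)=2(-4)=-8$. So $\dim H_1(X;\CL)=8$ and $\dim H_1(Y;\CL_0)=4$, and the same holds for the dual local systems.

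\emph{Cohomology.} For $H^k(X;\na)$ we use the comparison cited in the footnote: the algebraic twisted de Rham cohomology of the affine curve $X$ is isomorphic to $H^k(X;\ker\na^{\vee})$, the cohomology with coefficients in the local system $\CL$. By the universal coefficient theorem, or Poincaré duality for local systems, this is dual to the corresponding twisted homology. It therefore also vanishes outside degree one and has dimension $8$ on $X$ and $4$ on $Y$.

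The only delicate point is the vanishing of $H_0$, equivalently $H^0$, which uses nontrivial local monodromy at some puncture. It also requires that the hypothesis be applied in the form relevant to each space: $4c_j$ for $X$, where the exponent doubles because of ramification in the compactification, and $2c_j$ for $Y$.
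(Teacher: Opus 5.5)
Your proof is correct, but it takes a different route from the one the paper relies on. The paper gives no argument of its own; it cites \cite{AK} and \cite{Watanabe-punctured}. The latter works on the de Rham side: the twisted de Rham cohomology of the punctured curve is computed from the logarithmic twisted de Rham complex on the compactification via a spectral sequence. For genus $g$ and $n$ punctures this gives $E_\infty^{00}=0$ (because $\na 1=d\log T\neq 0$), $\dim E_\infty^{10}=g+n-2$ and $\dim E_\infty^{01}=g$, hence $\dim H^1=2g+n-2$, i.e.\ $8$ for $X$ and $4$ for $Y$. These are the $E_\infty^{10}$, $E_\infty^{01}$ appearing in the Remark of Section~\ref{section:cohomology}.

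You argue topologically instead. $X$ and $Y$ retract onto finite graphs, a single nontrivial monodromy kills $H_0$ (coinvariants), and $\chi(X;\CL)=\chi(X)=-8$, $\chi(Y;\CL_0)=\chi(Y)=-4$ give $\dim H_1$. Cohomology then follows from the de Rham comparison and duality. Your route is more elementary, and it shows the dimension count needs only one nontrivial monodromy rather than $4c_j\notin\Z$ for every $j$. The de Rham route relies on the non-integrality of the local exponents to replace the complex on $X$ by the logarithmic one on $\bar{X}$. In exchange, it tells you which forms give a basis: holomorphic plus logarithmic forms for $E_\infty^{10}$, and two classes accounting for $H^1(\bar{X},\CO)$. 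That is what guides the choice of $\vph_1,\dots,\vph_8$ in the paper.

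Two corrections to your write-up. First, your opening lines assert $2-2\cdot 2-6=-6$ and $2(2-6)=-6$, and then derive $\dim H_1(X;\CL)=6$ before retracting. Both expressions equal $-8$, so state $\chi(X)=-8$ from the start and delete the retracted computation. Second, the de Rham lemma identifies $H^k(X;\na)$ with the cohomology of $\ker\na=\C T^{-1}=\CL^{\vee}$, not of $\ker\na^{\vee}=\CL$. This is what makes $H^1(X;\na)$ dual to $H_1(X;\CL)$ under $\int_{\si}T\vph$. Also, the duality step should be the universal coefficient theorem rather than Poincar\'e duality. On the non-compact $X$, Poincar\'e duality lands in locally finite homology, and identifying that with $H_1$ is the regularization isomorphism, which needs nontrivial monodromy at every puncture. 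Neither point changes the dimensions.
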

Recall that the intersection forms are naturally defined: 
\begin{align*}
  &\langle \bu ,\bu \rangle_{\h} : H_1(X;\CL) \times H_1(X;\CL^{\vee}) \longrightarrow \C ,\quad& 
  &\langle \bu ,\bu \rangle_{\h,0} : H_1(Y;\CL_0) \times H_1(Y;\CL_0^{\vee}) \longrightarrow \C ,&\\
  &\langle \bu ,\bu \rangle_{\ch} : H^1(X;\na) \times H^1(X;\na^{\vee}) \longrightarrow \C ,\quad &
  &\langle \bu ,\bu \rangle_{\ch,0} : H^1(Y;\na_0) \times H^1(Y;\na_0^{\vee}) \longrightarrow \C .&
\end{align*}
By taking integrals, we have the period pairings 
$H^1(X;\na) \times H_1(X;\CL) \ni (\vph ,\si) \mapsto \int_{\si} T \vph \in \C$ and so on. 
For a twisted cycle $\si$ with coefficients in $\CL$ (resp. $\CL_0$), 
we can construct a cycle $\si^{\vee}$ with coefficients in $\CL^{\vee}$ (resp. $\CL_0^{\vee}$) by 
replacing $c_j$ with $-c_j$.

\subsection{Bases of $H_1(Y;\CL_0)$ and $H^1(Y;\na_0)$}
Let $\si_{j,j+1}\ (j=0,\dots ,4)$ be a twisted cycle in $Y$ obtained as
the regularization of a locally finite path from $z_j$ to $z_{j+1}$; 
for a detailed construction, see \cite{KY}.  
Here, the branch of $T_0$ on $(1,\infty)$ is assumed to be $\arg (t-z_j)=0$, and 
those on other cycles are defined by analytic continuation along the lower half-plane.
It is well-known that $\si_{01}$, $\si_{12}$, $\si_{23}$, $\si_{34}$ form a basis of $H_1(Y;\CL_0)$. 
Their intersection matrix is given as follows (cf. \cite{KY}):
\begin{align}
  \label{eq:int-mat-homology-Y}
  \begin{pmatrix}
    \frac{1-\bfe(2c_0+2c_1)}{(1-\bfe(2c_0))(1-\bfe(2c_1))}&\frac{1}{1-\bfe(2c_1)}&0&0 \\
    \frac{\bfe(2c_1)}{1-\bfe(2c_1)}&\frac{1-\bfe(2c_1+2c_2)}{(1-\bfe(2c_1))(1-\bfe(2c_2))}
    &\frac{1}{1-\bfe(2c_2)}&0 \\
    0&\frac{\bfe(2c_2)}{1-\bfe(2c_2)}
    &\frac{1-\bfe(2c_2+2c_3)}{(1-\bfe(2c_2))(1-\bfe(2c_3))}&\frac{1}{1-\bfe(2c_3)} \\
    0&0&\frac{\bfe(2c_3)}{1-\bfe(2c_3)}&\frac{1-\bfe(2c_3+2c_4)}{(1-\bfe(2c_3))(1-\bfe(2c_4))} 
  \end{pmatrix}
\end{align}
It is also known that logarithmic $1$-forms on $Y$ defined by 
\begin{align*}
  &\phi_1 =\frac{dt}{t-1}=d\log (t-z_4),\quad   
  \phi_2 =\frac{dt}{t(t-1)}=d\log \frac{t-z_4}{t-z_0} ,\\
  &\phi_3 =\frac{(1-z_1)dt}{(t-1)(t-z_1)}=d\log \frac{t-z_4}{t-z_1} ,\quad  
  \phi_4 =\frac{(1-z_2)dt}{(t-1)(t-z_2)}=d\log \frac{t-z_4}{t-z_2} 
\end{align*}
form a basis of $H^1(Y;\na_0)$, and their intersection matrix is given as follows 
(cf. \cite{CM}):
\begin{align}
  \label{eq:int-mat-cohomology-Y}
  \tpi 
  \begin{pmatrix}
    \frac{1}{2c_4}+\frac{1}{2c_5} & \frac{1}{2c_4} & \frac{1}{2c_4} & \frac{1}{2c_4} \\
    \frac{1}{2c_4} & \frac{1}{2c_4}+\frac{1}{2c_0} & \frac{1}{2c_4} & \frac{1}{2c_4} \\
    \frac{1}{2c_4} & \frac{1}{2c_4} & \frac{1}{2c_4}+\frac{1}{2c_1} & \frac{1}{2c_4} \\
    \frac{1}{2c_4} & \frac{1}{2c_4} & \frac{1}{2c_4} & \frac{1}{2c_4}+\frac{1}{2c_2} 
  \end{pmatrix} .
\end{align}

\section{Hyperelliptic involution}\label{section:involution}
The pullback $\iota^{*}$ of differential forms induces involutions
\begin{align*}
  \iota^{*} :H^1(X;\na)\to H^1(X;\na),\quad 
  \iota^{*} :H^1(X;\na^{\vee})\to H^1(X;\na^{\vee}). 
\end{align*}
We also obtain involutions 
\begin{align*}
  \iota_{*} :H_1(X;\CL)\to H_1(X;\CL),\quad 
  \iota_{*} :H_1(X;\CL^{\vee})\to H_1(X;\CL^{\vee}),
\end{align*}
which are characterized as, for example,  
\begin{align*}
  \int_{\iota_{*}(\si)} T \vph =\int_{\si} T \cdot \iota^{*}\vph \quad 
  (\si \in H_1(X;\CL), \ \vph \in H^1(X;\na)). 
\end{align*}
As in \cite{G-Shibukawa} and \cite{G-Matsubara-Mitsui}, 
we decompose the twisted (co)homology groups into eigenspaces: 
\begin{align*}
  &H^1(X;\na) = H^{(1)} \op H^{(-1)} ,& 
  &H^1(X;\na^{\vee}) = H^{(1)\vee} \op H^{(-1)\vee},& \\
  &H_1(X;\CL)=H_{(1)} \op H_{(-1)} ,&  
  &H_1(X;\CL^{\vee})=H_{(1)}^{\vee} \op H_{(-1)}^{\vee}.&  
\end{align*}
Here, we set, for example, 
$H^{(\pm 1)}=\{ \vph \in H^1(X;\na) \mid \iota^{*}\vph =\pm \vph \}$. 
Further, these decompositions have orthogonalities with respect to 
the intersection forms and the period pairings. 
For example, we have 
\begin{align*}
  H_{(-1)} 
  &=\Big\{ \si\in H_1(X;\CL) ~\Big| ~ \int_{\si} T \vph =0 \ (\forall \vph \in H^{(1)})  \Big\} \\
  &=\{ \si\in H_1(X;\CL) \mid \langle \si ,\si' \rangle_{\h} =0 \ (\forall \si' \in H_{(1)}^{\vee}) \} .
\end{align*}

We study these eigenspaces by applying the results of \cite{G-Matsubara-Mitsui} 
with $G=\{ 1,\iota\} \simeq \Z/2\Z$.  
We set $f:=x(x-1)/y \in \CO_X (X)$, which satisfies $\iota^{*}f=-f$. 
This corresponds to $\al_{\chi}$ in \cite{G-Matsubara-Mitsui} for the nontrivial character $\chi \in\hat{G}$. 
Similarly to \cite{G-Matsubara-Mitsui}, we define 
the connections and local systems on $X$: 
\begin{align*}
  &\na_f :=\na +d\log f\we ,\quad 
  \na_f^{\vee} :=\na^{\vee} -d\log f\we ,\\
  &\CL_f:=\ker (\na^{\vee})=\C (Tf) ,\quad  
  \CL_f^{\vee}:=\ker (\na) =\C (Tf)^{-1} . 
\end{align*}
Note that we have $\CL_f \simeq \CL$ and $\CL_f^{\vee} \simeq \CL^{\vee}$ as local systems on $X$. 
The multivalued function $Tf$ is obtained by replacing $(c_0,\dots ,c_5)$ in the definition of $T$
with 
\begin{align}
  \label{eq:c-shift}
  (c_0',c_1',c_2',c_3',c_4',c_5') 
  := \left( c_0+\frac{1}{4},c_1-\frac{1}{4},c_2-\frac{1}{4},c_3-\frac{1}{4},c_4+\frac{1}{4},c_5+\frac{1}{4} \right) .
\end{align}
We also have the decompositions into eigenspaces:
\begin{align*}
  &H^1(X;\na_f) = H^{(1)}_f \op H^{(-1)}_f ,& 
  &H^1(X;\na_f^{\vee}) = H^{(1)\vee}_f \op H^{(-1)\vee}_f,& \\
  &H_1(X;\CL_f)=H_{(1),f} \op H_{(-1),f} ,&  
  &H_1(X;\CL_f^{\vee})=H_{(1),f}^{\vee}\op H_{(-1),f}^{\vee}.&  
\end{align*}
\begin{Fact}[{\cite{G-Matsubara-Mitsui}}]\label{fact:f-bai}
  \begin{enumerate}[(i)]
  \item We have two isomorphisms
    \begin{align*}
      f^{-1}\times :H^1(X;\na) \ni \vph \mapsto \frac{1}{f}\vph \in  H^1(X;\na_f) ,\quad 
      f\times :H^1(X;\na^{\vee}) \ni \vph \mapsto f\vph \in  H^1(X;\na_f^{\vee}) .
    \end{align*}
  \item We have two isomorphisms 
    \begin{align*}
      \ot f : H_1(X;\CL_f) \ni \vsi \mapsto \vsi \ot f \in H_1(X;\CL_f), \quad 
      \ot f^{-1} : H_1(X;\CL_f^{\vee}) \ni \vsi \mapsto \vsi \ot \frac{1}{f} \in H_1(X;\CL_f^{\vee}).
    \end{align*}
    Here, for $\vsi \in H_1(X;\CL_f)$, we construct $\vsi \ot f$ in the same manner as
    $\vsi$, loading it with the branch of $Tf$. 
    (For the precise definition, see \cite{G-Matsubara-Mitsui}.)
  \item Under the isomorphisms in (i) and (ii), we have the correspondence
    \begin{align*}
      H^{(-1)} \simeq H_f^{(1)},\quad  
      H^{(-1)\vee} \simeq H_f^{(1)\vee},\quad 
      H_{(-1)} \simeq H_{(1),f} ,\quad 
      H_{(-1)}^{\vee} \simeq H_{(1),f}^{\vee}. 
    \end{align*}
  \end{enumerate}
\end{Fact}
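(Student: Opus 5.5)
The plan is to deduce everything from the single observation that $f=x(x-1)/y$ is a \emph{single-valued, nowhere vanishing} regular function on $X$ that is anti-invariant under $\iota$, while $T=\pr^{*}T_0$ is $\iota$-invariant. First I will check invertibility. On $\bar X$ we have
\begin{align*}
  f^2=\frac{x(x-1)}{(x-z_1)(x-z_2)(x-z_3)},
\end{align*}
so the zeros and poles of $f$ lie among $P_0,\dots,P_5$. Hence $f,f^{-1}\in\CO_X(X)$. Moreover $\iota^{*}f=-f$ is immediate from $\iota^{*}y=-y$, and $\iota^{*}T=T$ because $T$ depends only on $x$.

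For (i), I would verify the intertwining identity on functions $h\in\CO_X(X)$:
\begin{align*}
  \na_f\Bigl(\frac{h}{f}\Bigr)=d\Bigl(\frac hf\Bigr)+\bigl(d\log T+d\log f\bigr)\frac hf=\frac{dh+h\,d\log T}{f}=\frac1f\,\na h ,
\end{align*}
and likewise $\na_f^{\vee}(fh)=f\,\na^{\vee}h$. Thus $f^{-1}\times$ is an isomorphism of the twisted de Rham complexes on $X$, with inverse $f\times$. This works because $f^{\pm1}$ are regular on $X$, so it induces isomorphisms on $H^1$; the dual statement is identical.

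For (ii), I read the maps as going from $H_1(X;\CL)$ to $H_1(X;\CL_f)$ and from $H_1(X;\CL^{\vee})$ to $H_1(X;\CL_f^{\vee})$. Multiplication by the single-valued unit $f$ gives an isomorphism of local systems $\C T\to\C(Tf)$. Applying it to the coefficients of a twisted chain replaces each loaded branch of $T$ by the corresponding branch of $Tf$. This commutes with the boundary operator, so it gives an isomorphism on chains and hence on $H_1$. I would also record the compatibility with the period pairing, $\int_{\si\ot f}Tf\cdot(f^{-1}\vph)=\int_{\si}T\vph$. The analogous statement holds for the intersection forms: the pairing of coefficients $f\cdot f^{-1}=1$ leaves local intersection numbers unchanged.

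For (iii), the cohomological part follows from $\iota^{*}(\vph/f)=\iota^{*}\vph/\iota^{*}f=-(\iota^{*}\vph)/f$. Thus $f^{-1}\times$ maps the $(-1)$-eigenspace of $\iota^{*}$ onto the $(+1)$-eigenspace, and the same holds for $f\times$ on the dual side. For homology, I would argue by duality rather than at chain level. The period pairing $H^1(X;\na_f)\times H_1(X;\CL_f)\to\C$ is perfect, and combined with the characterization of $\iota_{*}$ this gives
\begin{align*}
  \int_{\iota_*(\si\ot f)}Tf\cdot\psi=\int_{\si\ot f}Tf\,\iota^{*}\psi
  =\int_{\si}T\,f\,\iota^{*}\psi =-\int_{\si}T\,\iota^{*}(f\psi)=-\int_{\iota_*\si}T f\psi .
\end{align*}
Hence $\iota_{*}(\si\ot f)=-(\iota_{*}\si)\ot f$, and $\ot f$ exchanges $H_{(-1)}$ and $H_{(1),f}$.

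The main obstacle is making the homological $\iota_{*}$ precise, i.e.\ how $\iota$ acts on the loaded branches. I will sidestep this by working through the period pairing, which requires perfectness of the pairing; that is standard under the assumption $4c_j\notin\Z$.
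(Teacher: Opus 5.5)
Your proof is correct. The paper gives no argument of its own for this Fact; it imports it from \cite{G-Matsubara-Mitsui}. Your mechanism is exactly what the paper relies on in (\ref{eq:cohomology-int-(-1)eigen}) and (\ref{eq:homology-int-(-1)eigen}): $f$ is a unit on $X$ with $\iota^{*}f=-f$, so multiplication by $f^{\pm 1}$ identifies $\na,\na^{\vee}$ and $\CL,\CL^{\vee}$ with their $f$-twisted counterparts while reversing the sign of the involution.

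You were right to read (ii) as maps $H_1(X;\CL)\to H_1(X;\CL_f)$ and $H_1(X;\CL^{\vee})\to H_1(X;\CL_f^{\vee})$. The printed source spaces are typos, as is $\CL_f=\ker(\na^{\vee})$, which should read $\ker(\na_f^{\vee})$.

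Two small remarks:
\begin{enumerate}[(1)]
\item Your coefficientwise definition of $\vsi\ot f$ agrees with the paper's description (``same construction, loaded with the branch of $Tf$'') even for regularized cycles. Since $\bfe(4c_j')=\bfe(4c_j)$, the regularization coefficients around the $P_j$ do not change.
\item Perfectness of the period pairing does not actually need $4c_j\notin\Z$; it follows from the comparison theorem and universal coefficients. You could also avoid it entirely by defining $\iota_{*}$ on chains by $\Delta\ot u\mapsto\iota(\Delta)\ot(u\circ\iota^{-1})$. This definition is compatible with the paper's characterization of $\iota_{*}$, and it gives $\iota_{*}(\si\ot f)=-(\iota_{*}\si)\ot f$ directly from $f\circ\iota^{-1}=-f$.
\end{enumerate}
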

Recall that the double cover $\pr :X\to Y$ induces 
$\pr^{*}:H^1(Y;\na_0) \to H^1(X;\na)$, $\pr_{*}:H_1(X;\CL)\to H_1(Y;\CL_0)$, and so on. 
\begin{Fact}[{\cite{G-Matsubara-Mitsui}}]\label{fact:pr-eigen}
  By $\pr^{*}$ and $\pr_{*}$, we have
  \begin{align*}
    H^{(1)} \simeq H^1(Y;\na_0),\quad  
    H^{(1)\vee} \simeq H^1(Y;\na_0^{\vee}),\quad 
    H_{(1)} \simeq H_1(Y;\CL_0) ,\quad 
    H_{(1)}^{\vee} \simeq H_1(Y;\CL_0^{\vee}). 
  \end{align*}
  Moreover, we have
  \begin{align*}
    &\langle \pr^{*}\phi,\pr^{*}\phi' \rangle_{\ch}= 2 \cdot \langle \phi ,\phi'\rangle_{\ch,0} \quad 
      (\phi \in H^1(Y;\na_0),\ \phi' \in H^1(Y;\na_0^{\vee})) ,\\
    &\langle \vsi ,\vsi' \rangle_{\h} =\frac{1}{2} \cdot \langle \pr_{*}(\vsi),\pr_{*}(\vsi') \rangle_{\h,0} \quad 
      (\vsi \in H_{(1)},\ \vsi' \in H_{(1)}^{\vee}) .
  \end{align*}
\end{Fact}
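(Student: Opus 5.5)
The first task is to identify the invariant subspaces with objects on $Y$. Since $\pr\circ\iota=\pr$, every pulled-back form satisfies $\iota^{*}\pr^{*}\phi=\pr^{*}\phi$, so $\pr^{*}$ maps $H^1(Y;\na_0)$ into $H^{(1)}$. Dually, $\pr_{*}\circ\iota_{*}=\pr_{*}$. To show that $\pr^{*}$ restricted to $H^{(1)}$ is an isomorphism, I would use the transfer (trace) map. Since $T=\pr^{*}T_0$, the map $\pr_{*}$ on forms, $\psi\mapsto$ the pushforward of $\psi$ along the two sheets, commutes with $\na$ and $\na_0$. We have $\pr_{*}\pr^{*}=2$ on $H^1(Y;\na_0)$ and $\pr^{*}\pr_{*}=1+\iota^{*}$ on $H^1(X;\na)$. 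On $H^{(1)}$ the latter equals $2$. Hence $\pr^{*}$ and $\tfrac12\pr_{*}$ are mutually inverse between $H^1(Y;\na_0)$ and $H^{(1)}$. The same argument works for $\na^{\vee}$.

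For homology I would define a lift $\pr^{!}:H_1(Y;\CL_0)\to H_1(X;\CL)$ that sends a twisted chain to the sum of its two preimages, each loaded with the pulled-back branch of $T$. This is legitimate because $\CL=\pr^{*}\CL_0$ and the covering is unramified over $Y$. It gives $\pr_{*}\pr^{!}=2$ and $\pr^{!}\pr_{*}=1+\iota_{*}$, so $\pr_{*}|_{H_{(1)}}$ is an isomorphism. A dimension check is consistent with this: $H^{(-1)}\simeq H^{(1)}_f\simeq H^1(Y;\na_0')$ by Fact \ref{fact:f-bai} and the same argument applied to the shifted exponents (\ref{eq:c-shift}), which satisfy $4c_j'\notin\Z$. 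This gives $4+4=8$, matching Fact \ref{fact:vanish-dim}, although the argument does not strictly need it.

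For the intersection formulas, cohomology goes as follows. The form $\pr^{*}\phi$ is represented by a compactly supported form $\pr^{*}\phi^{c}$, where $\phi^{c}$ is a compactly supported representative on $Y$. This is valid because the $\na$-primitives near the punctures pull back, owing to $T=\pr^{*}T_0$. Then
\begin{align*}
\langle \pr^{*}\phi,\pr^{*}\phi'\rangle_{\ch}=\int_X \pr^{*}(\phi^{c}\we\phi')=2\int_Y\phi^{c}\we\phi' ,
\end{align*}
since $\pr$ has degree $2$, and the last integral is $2\langle\phi,\phi'\rangle_{\ch,0}$ (with the normalizing factor, if any, identical on both sides). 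For homology, take $\vsi\in H_{(1)}$ and write $\vsi=\tfrac12\pr^{!}\pr_{*}\vsi$, and similarly for $\vsi'$. Intersection numbers are local, and $\pr$ is a local homeomorphism. Two lifts that lie on the same sheet contribute exactly the corresponding local intersections on $Y$, while lifts on different sheets are disjoint. Therefore $\langle\pr^{!}s,\pr^{!}s'\rangle_{\h}=2\langle s,s'\rangle_{\h,0}$. Substituting gives $\langle\vsi,\vsi'\rangle_{\h}=\tfrac14\cdot 2\langle\pr_{*}\vsi,\pr_{*}\vsi'\rangle_{\h,0}=\tfrac12\langle\pr_{*}\vsi,\pr_{*}\vsi'\rangle_{\h,0}$.

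I expect the main obstacle to be the careful verification that the transfer maps are well defined on twisted (co)homology, in particular that the branch bookkeeping for $\pr^{!}$ behaves correctly near the regularizing circles around the punctures. I would handle this by working locally, where $\pr$ is a trivial double cover and every construction splits into two identical copies of the one on $Y$.
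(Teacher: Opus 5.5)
Your proof is correct. The paper gives no proof of this Fact. It imports it from the general treatment of finite Abelian coverings in \cite{G-Matsubara-Mitsui}, applied with $G=\{1,\iota\}$. There, as the remark identifying $f$ with $\al_\chi$ for the nontrivial character suggests, all characters $\chi\in\hat G$ are presumably handled uniformly, with the constants $2$ and $\tfrac12$ arising from the order of the covering group.

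Your route is a self-contained specialization to the trivial character of a degree-two unramified cover. The trace $\pr_*$ and the transfer $\pr^!$ satisfy $\pr_*\pr^*=2$, $\pr^*\pr_*=1+\iota^*$ on cohomology and $\pr_*\pr^!=2$, $\pr^!\pr_*=1+\iota_*$ on homology, which gives the four isomorphisms. The cohomological intersection identity comes from $\deg\pr=2$ applied to the compactly supported representative $\pr^*\phi^c$. The homological one comes from $\vsi=\tfrac12\pr^!\pr_*\vsi$ together with the local count $\langle\pr^!s,\pr^!s'\rangle_{\h}=2\langle s,s'\rangle_{\h,0}$.

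This is more elementary than the cited framework and makes the origin of the constants transparent. What the cited framework buys is the simultaneous treatment of the nontrivial character, i.e.\ Fact~\ref{fact:f-bai}, which the paper needs for the $(-1)$-eigenspaces. Your dimension check shows that your method covers that case too. Indeed, $d\log f=\tfrac12\pr^*d\log\frac{t(t-1)}{(t-z_1)(t-z_2)(t-z_3)}$ is pulled back from $Y$, so $\na_f=\pr^*\na_{0,f}$ and the same transfer argument applies with the shifted exponents.

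Finally, the obstacle you anticipate does not arise. For a finite unramified cover, $\pr^!$ is a chain map on singular chains with coefficients in $\CL=\pr^{-1}\CL_0$, and only homology classes enter the argument. So you never need $\pr^!$ of a regularized cycle on $Y$ to agree with a regularized lift on $X$.
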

By this fact, we can evaluate the intersection numbers on the ``$1$-eigenspaces''. 
To evaluate those on the ``$(-1)$-eigenspaces'', we combine Facts \ref{fact:f-bai} and \ref{fact:pr-eigen}. 
We define the connections and local systems on $Y$: 
\begin{align*}
  &\na_{0,f} :=\na_0 +d\log f\we ,\quad 
  \na_{0,f}^{\vee} :=\na_0^{\vee} -d\log f\we ,\\
  &\CL_{0,f}:=\ker (\na_0^{\vee})=\C (Tf) ,\quad  
  \CL_{0,f}^{\vee}:=\ker (\na_0) =\C (Tf)^{-1} . 
\end{align*}
Since $f$ is not a singlevalued function on $Y$, we note that $\CL_{0,f} \not\simeq \CL_0$. 
We also consider the intersection forms
\begin{align*}
  \langle \bu ,\bu \rangle_{\h,f} &: H_1(X;\CL_f) \times H_1(X;\CL_f^{\vee}) \longrightarrow \C ,\quad 
  \langle \bu ,\bu \rangle_{\h,0,f} : H_1(Y;\CL_{0,f}) \times H_1(Y;\CL_{0,f}^{\vee}) \longrightarrow \C ,\\
  \langle \bu ,\bu \rangle_{\ch,f} &: H^1(X;\na_f) \times H^1(X;\na_f^{\vee}) \longrightarrow \C ,\quad 
  \langle \bu ,\bu \rangle_{\ch,0,f} : H^1(Y;\na_{0,f}) \times H^1(Y;\na_{0,f}^{\vee}) \longrightarrow \C .
\end{align*}
These intersection forms can be evaluated by replacing the parameters $(c_0,\dots ,c_5)$
with $(c_0',\dots ,c_5')$ in (\ref{eq:c-shift}).
For $\vph \in H^{(-1)}$ and $\vph' \in H^{(-1)\vee}$, 
we have $\frac{1}{f}\vph \in H_f^{(1)}$ and $f \vph' \in H_f^{(1)\vee}$. 
Thus, there exist $\phi \in H^1(Y;\na_{0,f})$ and $\phi' \in H^1(Y;\na_{0,f}^{\vee})$ such that  
$\frac{1}{f}\vph =\pr^{*}\phi$ and $f \vph' = \pr^{*} \phi'$. 
We can evaluate the cohomology intersection number as follows: 
\begin{align}
  \label{eq:cohomology-int-(-1)eigen}
  \langle \vph ,\vph' \rangle_{\ch} 
  =\Big\langle \frac{1}{f}\vph ,f \vph' \Big\rangle_{\ch,f} 
  =\langle \pr^{*}\phi,\pr^{*}\phi' \rangle_{\ch,f}
  =2\cdot \langle \phi ,\phi' \rangle_{\ch ,0,f}. 
\end{align}
For $\vsi \in H_{(-1)}$ and $\vsi' \in H_{(-1)}^{\vee}$, we can also evaluate 
the intersection number in a similar manner: 
\begin{align}
  \label{eq:homology-int-(-1)eigen}
  \langle \vsi ,\vsi' \rangle_{\h}
  =\Big\langle \vsi \ot f,\vsi' \ot \frac{1}{f} \Big\rangle_{\h,f}
  =\frac{1}{2}\cdot \Big\langle \pr_{*}(\vsi \ot f),\pr_{*}\Big(\vsi' \ot \frac{1}{f} \Big) \Big\rangle_{\h,0,f}.
\end{align}

\section{Twisted cohomology group}\label{section:cohomology}
For $i=1,\dots ,4$, we set $\vph_i :=\pr^{*} \phi_i$ and $\vph_{i+4}:=f\cdot \vph_i$: 
\begin{align*}
  &\vph_1 =\frac{dx}{x-1}, \quad \vph_2 =\frac{dx}{x(x-1)} ,\quad 
  \vph_3 =\frac{(1-z_1)dx}{(x-1)(x-z_1)} ,\quad \vph_4 =\frac{(1-z_2)dx}{(x-1)(x-z_2)} ,\\
  &\vph_5 =\frac{xdx}{y}, \quad \vph_6 =\frac{dx}{y} ,\quad 
  \vph_7 =(1-z_1)\frac{xdx}{(x-z_1)y} ,\quad \vph_8 =(1-z_2)\frac{xdx}{(x-z_2)y} .
\end{align*}
It is obvious that $\vph_1 ,\dots ,\vph_4$ defines elements of $H^{(1)}$ or $H^{(1)\vee}$, 
and $\vph_5 ,\dots ,\vph_8$ defines elements of $H^{(-1)}$ or $H^{(-1)\vee}$.
\begin{Th}\label{th:cohomology-intersection}
  The intersection matrix $C$ of $\vph_1 ,\dots \vph_8$ is 
  \begin{align}
    \label{eq:cohomology-int-phi1-phi8}
    C =\tpi
    \begin{pmatrix}
       C(1) & O \\ O & C(-1)
    \end{pmatrix}, 
  \end{align}
  where 
  \begin{align*}
    C(1) &=
    \begin{pmatrix}
      \frac{c_4+c_5}{c_4c_5} & \frac{1}{c_4} & \frac{1}{c_4} & \frac{1}{c_4} \\
      \frac{1}{c_4} & \frac{c_4+c_0}{c_4c_0} & \frac{1}{c_4} & \frac{1}{c_4} \\
      \frac{1}{c_4} & \frac{1}{c_4} & \frac{c_4+c_1}{c_4c_1} & \frac{1}{c_4} \\
      \frac{1}{c_4} & \frac{1}{c_4} & \frac{1}{c_4} & \frac{c_4+c_2}{c_4c_2} 
    \end{pmatrix} ,\\
    C(-1) &=\frac{2}{z_1-z_2}\cdot 
    \begin{pmatrix}
      0& 0&-\frac{z_1}{(2c_1+\frac{1}{2})(z_1-z_3)} &\frac{z_2}{(2c_2+\frac{1}{2})(z_2-z_3)} \\ 
      0& 0&-\frac{1}{(2c_1+\frac{1}{2})(z_1-z_3)} &\frac{1}{(2c_2+\frac{1}{2})(z_2-z_3)} \\ 
      -\frac{z_1}{(2c_1-\frac{1}{2})(z_1-z_3)}& -\frac{1}{(2c_1-\frac{1}{2})(z_1-z_3)}&
      C_{33}&C_{34}\\ 
      \frac{z_2}{(2c_2-\frac{1}{2})(z_2-z_3)}& \frac{1}{(2c_2-\frac{1}{2})(z_2-z_3)}&
      C_{43}&C_{44}
    \end{pmatrix} ,\\
    &C_{33}=\frac{(1-z_1)z_1}{(2c_1-\frac{1}{2})(2c_1+\frac{1}{2})(z_1-z_3)}\left(
    \frac{2c_0-2c_1}{z_1}+\frac{2c_2+2c_1}{z_1-z_2} +\frac{2c_3+2c_1}{z_1-z_3}
    +\frac{2c_1+2c_4}{z_1-1}
    \right) ,\\
    &C_{34}=\frac{-1}{z_1-z_2}\Big(\frac{z_1(1-z_2)}{(2c_1-\frac{1}{2})(z_1-z_3)}
      +\frac{z_2(1-z_1)}{(2c_2+\frac{1}{2})(z_2-z_3)}\Big) ,\\
    &C_{43}=\frac{-1}{z_1-z_2}\Big(\frac{z_1(1-z_2)}{(2c_1+\frac{1}{2})(z_1-z_3)} 
      +\frac{z_2(1-z_1)}{(2c_2-\frac{1}{2})(z_2-z_3)} \Big) ,\\
    &C_{44}=-\frac{(1-z_2)z_2}{(2c_2-\frac{1}{2})(2c_2+\frac{1}{2})(z_2-z_3)}\left(
    \frac{2c_0-2c_2}{z_2}+\frac{2c_1+2c_2}{z_2-z_1} +\frac{2c_3+2c_2}{z_2-z_3}
    +\frac{2c_2+2c_4}{z_2-1}
    \right) .
  \end{align*}
\end{Th}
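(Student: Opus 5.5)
The plan is to treat the two diagonal blocks and the off-diagonal blocks separately, using the eigenspace decomposition of Section \ref{section:involution} together with Facts \ref{fact:f-bai} and \ref{fact:pr-eigen}.

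\emph{Off-diagonal blocks.} The forms $\vph_1,\dots,\vph_4$ lie in $H^{(1)}$ (resp.\ $H^{(1)\vee}$) and $\vph_5,\dots,\vph_8$ lie in $H^{(-1)}$ (resp.\ $H^{(-1)\vee}$). The intersection form is $\iota$-invariant, so $\langle \iota^*\vph,\iota^*\vph'\rangle_{\ch}=\langle\vph,\vph'\rangle_{\ch}$. Hence the $(1)$- and $(-1)$-eigenspaces are mutually orthogonal, and both off-diagonal blocks of $C$ vanish.

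\emph{The block $C(1)$.} Since $\vph_i=\pr^*\phi_i$, Fact \ref{fact:pr-eigen} gives $\langle\vph_i,\vph_j\rangle_{\ch}=2\langle\phi_i,\phi_j\rangle_{\ch,0}$. Multiplying the matrix (\ref{eq:int-mat-cohomology-Y}) by $2$ yields $C(1)$ directly; for instance $2(\frac{1}{2c_4}+\frac{1}{2c_5})=\frac{c_4+c_5}{c_4c_5}$.

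\emph{The block $C(-1)$.} I would use (\ref{eq:cohomology-int-(-1)eigen}). For $\vph_{i+4}=f\vph_i\in H^{(-1)}$ we have $\frac1f\vph_{i+4}=\pr^*\phi_i$, now regarded in $H^1(Y;\na_{0,f})$. For the dual side, $f\vph_{j+4}=f^2\vph_j$, and
\begin{align*}
  f^2=\frac{x^2(x-1)^2}{y^2}=\frac{x(x-1)}{(x-z_1)(x-z_2)(x-z_3)}
\end{align*}
is a rational function on $Y$. Therefore
\begin{align*}
  \langle\vph_{i+4},\vph_{j+4}\rangle_{\ch}=2\,\langle\phi_i,f^2\phi_j\rangle_{\ch,0,f},
\end{align*}
where the right-hand side is an intersection number on $\P^1$ for the exponents $2c_k'$ of (\ref{eq:c-shift}). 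Explicitly:
\begin{align*}
  f^2\phi_1&=\frac{x\,dx}{(x-z_1)(x-z_2)(x-z_3)}, &
  f^2\phi_2&=\frac{dx}{(x-z_1)(x-z_2)(x-z_3)},\\
  f^2\phi_3&=\frac{(1-z_1)x\,dx}{(x-z_1)^2(x-z_2)(x-z_3)}, &
  f^2\phi_4&=\frac{(1-z_2)x\,dx}{(x-z_1)(x-z_2)^2(x-z_3)}.
\end{align*}
The first two are logarithmic with poles only at $z_1,z_2,z_3$, since they are holomorphic at $\infty$. The last two have a double pole at $z_1$ (resp.\ $z_2$).

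I would compute each entry with the standard residue formula
\begin{align*}
  \langle\phi,\phi'\rangle_{\ch,0,f}=\tpi\sum_p \mathrm{Res}_p(\psi_p\phi'),
\end{align*}
where $\psi_p$ is the local meromorphic solution of $\na_{0,f}\psi_p=\phi$ near $p$. Only common poles of $\phi_i$ and $f^2\phi_j$ contribute, which are $z_1,z_2$ only.
\begin{itemize}
  \item For $i,j\in\{1,2\}$ there are no common poles, so these entries are $0$.
  \item For a logarithmic pair the local term is $\mathrm{Res}\,\phi\cdot\mathrm{Res}\,\phi'/(2c_p')$. At $z_1$ the exponent is $2c_1'=2c_1-\frac12$; the dual exponent is $-(2c_1+\frac12)$. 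This explains the two denominators $2c_1\pm\frac12$ in $C(-1)$.
  \item When $f^2\phi_3$ or $f^2\phi_4$ has a double pole, I need the first two Laurent coefficients of $\psi_p$. Write $\psi_p=\frac{a_0}{2c_1'}+a_1(x-z_1)+\cdots$ and solve recursively using the expansion of $d\log(Tf)=\sum_k 2c_k'\,\frac{dx}{x-z_k}$ at $z_1$. The subleading coefficient then brings in $\sum_{k\neq1}\frac{2c_k'}{z_1-z_k}$, up to the shifts by $\pm\frac12$, which is exactly the bracket appearing in $C_{33}$ and $C_{44}$.
\end{itemize}

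The main obstacle is the bookkeeping for the entries $C_{33},C_{34},C_{43},C_{44}$. These involve the double-pole expansions, the shifted exponents, and a final partial-fraction simplification to extract the common factor $\frac{2}{z_1-z_2}$ and the factors $\frac{1}{z_1-z_3},\frac{1}{z_2-z_3}$. I would do the logarithmic entries first as a check on signs and normalizations, then expand at $z_1$ and $z_2$ carefully, keeping track of which side carries $+\frac12$ and which carries $-\frac12$.
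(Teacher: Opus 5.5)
Your overall route is the paper's: orthogonality of the eigenspaces kills the off-diagonal blocks, Fact~\ref{fact:pr-eigen} gives $C(1)$, and (\ref{eq:cohomology-int-(-1)eigen}) reduces $C(-1)$ to $2\langle\phi_i,f^2\phi_j\rangle_{\ch,0,f}$ on $\P^1$ with exponents $2c_k'$. The paper then hands the residue computation to Matsumoto's method. The gap is in how you set up that residue computation.

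\emph{The error.} Your rule that only common poles of $\phi_i$ and $f^2\phi_j$ contribute is false once a double pole is present. Let $\phi$ be holomorphic at a puncture $p$, with $\phi=\phi(p)\,dt+\cdots$. Then the local solution is $\psi_p=\frac{\phi(p)}{1+2c_p'}(t-p)+\cdots$. Against $\phi'=b_{-2}(t-p)^{-2}dt+\cdots$ it gives the residue $\phi(p)\,b_{-2}/(1+2c_p')\neq 0$. The correct criterion is that $p$ contributes whenever the pole orders of $\phi_i$ and $f^2\phi_j$ at $p$ add up to at least $2$, counting a holomorphic point as order $0$.

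\emph{Consequences of the rule.} The poles of $\phi_1,\phi_2$ lie in $\{0,1,\infty\}$ and those of $f^2\phi_3,f^2\phi_4$ lie in $\{z_1,z_2,z_3\}$. So your rule makes the entries $(1,3),(1,4),(2,3),(2,4)$ vanish. Together with your (correct) zero $2\times2$ block, the first two rows of $C(-1)$ would be zero, which contradicts the statement and makes $C(-1)$ singular. The rule also drops the second summand of $C_{34}$ and of $C_{43}$. For the $(1,3)$ entry, the omitted contribution at $z_1$ uses $\phi_1(z_1)=\frac{1}{z_1-1}$, $b_{-2}=\frac{(1-z_1)z_1}{(z_1-z_2)(z_1-z_3)}$ and $1+2c_1'=2c_1+\frac12$. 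After multiplying by $2$ it is exactly $\frac{2}{z_1-z_2}\cdot\bigl(-\frac{z_1}{(2c_1+\frac12)(z_1-z_3)}\bigr)$.

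\emph{The denominators.} Your explanation of them is also off. The dual connection $\na_{0,f}^{\vee}$ has exponent $-2c_1'=-(2c_1-\frac12)$ at $z_1$, not $-(2c_1+\frac12)$. In your formulation only $\na_{0,f}$ enters anyway.
\begin{itemize}
\item $2c_1-\frac12=2c_1'$ comes from logarithmic pairs at $z_1$: entries $(3,1)$, $(3,2)$ and the first summand of $C_{34}$.
\item $2c_1+\frac12=1+2c_1'$ comes from the first-order coefficient of $\psi_{z_1}$ meeting the double pole of $f^2\phi_3$: entries $(1,3)$, $(2,3)$ and the second summand of $C_{43}$.
\item Both appear in $C_{33}$, and the same pattern holds at $z_2$.
\end{itemize}

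\emph{The bracket in $C_{33}$.} This is more than the subleading term of $d\log(Tf)$ up to shifts. Writing $f^2\phi_3=g(t)(t-z_1)^{-2}dt$, the residue at $z_1$ is $\frac{g(z_1)}{2c_1'(1+2c_1')}$ times
\[
-(1+2c_1')\Bigl(\tfrac1{z_1}-\tfrac1{z_1-z_2}-\tfrac1{z_1-z_3}\Bigr)+\tfrac{2c_1'}{z_1-1}+\sum_{k\in\{0,2,3,4\}}\tfrac{2c_k'}{z_1-z_k}.
\]
These three pieces come from $b_{-1}=g'(z_1)$, the constant term of $\phi_3$, and the subleading term of $d\log(Tf)$. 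Only their combination produces the $2c_1$ in every term of the bracket.

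With the corrected criterion your recursive expansion reproduces all of $C(-1)$.
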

\begin{proof}
  Because of the orthogonality with respect to the intersection form, 
  it is clear that $C$ has the form of (\ref{eq:cohomology-int-phi1-phi8}). 
  The matrix $C(1)$ is obtained from (\ref{eq:int-mat-cohomology-Y}) by Fact \ref{fact:pr-eigen}. 
  To evaluate $C(-1)$, we use the equality (\ref{eq:cohomology-int-(-1)eigen}): 
  \begin{align*}
    \langle \vph_{4+i},\vph_{4+j} \rangle_{\ch}
    &=\langle f\cdot \vph_i,f\cdot \vph_j \rangle_{\ch}
      =\langle \vph_i,f^2\cdot \vph_j \rangle_{\ch,f}
      =\Big\langle \pr^{*}(\phi_i), \pr^{*}\Big(\frac{t(t-1)}{(t-z_1)(t-z_2)(t-z_3)}\cdot \phi_j \Big) \Big\rangle_{\ch,f} \\
    &=2\cdot \Big\langle \phi_i, \frac{t(t-1)}{(t-z_1)(t-z_2)(t-z_3)}\cdot \phi_j \Big\rangle_{\ch,0,f} .
  \end{align*}
  This intersection number can be evaluated in the same manner as in \cite{M-k-form}.
\end{proof}

By straightforward calculation, we have
\begin{align*}
  \det C(1)
  &=\frac{c_0 +c_1 +c_2 +c_4 +c_5}{c_0 c_1 c_2 c_4 c_5}=\frac{-c_3}{c_0 c_1 c_2 c_4 c_5} ,\\
  \det C(-1)
  &=\frac{2^8}{(z_1-z_2)^2 (z_1-z_3)^2 (z_2-z_3)^2 (4c_1-1)(4c_1+1)(4c_2-1)(4c_2+1)}, 
\end{align*}
which imply the following corollary. 
\begin{Cor}
  The $1$-forms $\vph_1,\dots ,\vph_4$ and $\vph_5,\dots ,\vph_8$ form a basis of $H^{(1)}$ 
  and $H^{(-1)}$, respectively. 
  In particular, $\vph_1,\dots ,\vph_8$ form a basis of $H^1(X;\na)$.
\end{Cor}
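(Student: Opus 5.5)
The corollary follows from nondegeneracy of the intersection matrix computed in Theorem \ref{th:cohomology-intersection}, combined with dimension counting.

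\emph{Dimensions.} By Fact \ref{fact:pr-eigen}, $H^{(1)}\simeq H^1(Y;\na_0)$, which is $4$-dimensional by Fact \ref{fact:vanish-dim}. Since $H^1(X;\na)=H^{(1)}\op H^{(-1)}$ has dimension $8$, we get $\dim H^{(-1)}=4$. The same argument for $\na^{\vee}$ gives $\dim H^{(1)\vee}=\dim H^{(-1)\vee}=4$.

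\emph{Linear independence.} The vectors $\vph_1,\dots,\vph_4$ lie in $H^{(1)}$ and in $H^{(1)\vee}$, and $\vph_5,\dots,\vph_8$ lie in $H^{(-1)}$ and in $H^{(-1)\vee}$. Suppose $\sum_i a_i\vph_i=0$ in $H^{(1)}$. Pairing with each $\vph_j\in H^{(1)\vee}$ gives $(a_1,\dots,a_4)\,C(1)=0$. So it suffices to show $\det C(1)\neq0$ and $\det C(-1)\neq0$; the same argument handles $C(-1)$.

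\emph{Nonvanishing of the determinants.} We use the formulas stated just before the corollary. Each $c_j\neq0$ because $4c_j\notin\Z$. Hence
$$\det C(1)=\frac{-c_3}{c_0c_1c_2c_4c_5}$$
is finite and nonzero; its derivation uses the relation $c_0+\dots+c_5=0$, which follows from the definitions of the $c_j$. For $C(-1)$:
\begin{itemize}
\item the numerator $2^8$ is nonzero;
\item the denominator is finite because $z_1,z_2,z_3$ are pairwise distinct and $4c_1\pm1,\,4c_2\pm1\neq0$ (since $4c_j\notin\Z$).
\end{itemize}
So $\det C(-1)\neq0$. These conditions also guarantee that the entries of $C$ themselves are well defined.

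\emph{Conclusion.} Four linearly independent vectors in a $4$-dimensional space form a basis, so $\vph_1,\dots,\vph_4$ is a basis of $H^{(1)}$ and $\vph_5,\dots,\vph_8$ is a basis of $H^{(-1)}$. By the direct sum decomposition, $\vph_1,\dots,\vph_8$ is a basis of $H^1(X;\na)$.

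The main obstacle is the determinant computation for $C(-1)$, especially cancelling the $C_{33},C_{44}$ entries. Because the upper-left $2\times2$ block of $C(-1)$ is zero, the determinant equals (up to sign) the product of the determinants of the two off-diagonal $2\times2$ blocks. Each of those blocks is a Vandermonde-type matrix in $z_1,z_2$, with determinant a multiple of $z_1-z_2$. Therefore $C_{33},C_{34},C_{43},C_{44}$ never enter the determinant.
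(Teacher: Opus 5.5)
Your proof is correct and follows the paper's route. The paper likewise deduces the corollary from the nonvanishing of $\det C(1)=-c_3/(c_0c_1c_2c_4c_5)$ and of $\det C(-1)=2^8/\bigl((z_1-z_2)^2(z_1-z_3)^2(z_2-z_3)^2(4c_1-1)(4c_1+1)(4c_2-1)(4c_2+1)\bigr)$, and your dimension count via Fact~\ref{fact:pr-eigen}, the pairing argument, and the block-determinant check (which correctly reproduces this value without $C_{33},\dots,C_{44}$) just make explicit what the paper leaves implicit.
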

\begin{Rem}
  In terms of \cite[Example 2]{Watanabe-punctured}, 
  the holomorphic $1$-forms $\vph_5,\vph_6$ and 
  the logarithmic ones $\vph_1,\dots ,\vph_4$ form a basis of $E_{\infty}^{10}$, 
  and $\vph_7,\vph_8$ form a basis of $E_{\infty}^{01}$. 
\end{Rem}

\section{Twisted homology group}\label{section:homology}
\subsection{Twisted cycles}
We construct twisted cycles that represent elements of $H_1(X;\CL)$, 
referring to \cite{Mizutani-Watanabe-J}, \cite{Mizutani-Watanabe-E}. 
We take a lift $\widetilde{(1,\infty)} \subset X$ of the open interval $(1,\infty)\subset Y$ such that 
$(x,y)\in \widetilde{(1,\infty)}$ satisfies $y>0$. 
It can be naturally regarded as a locally finite twisted cycle, and we can construct 
a (finite) twisted cycle $\vsi_{45} \in H_1(X;\CL)$ such that
$\pr_{*}(\vsi_{45})=\si_{45}$. 
For $j=0,1,2,3$, we can also construct 
$\vsi_{j,j+1} \in H_1(X;\CL)$ satisfying $\pr_{*}(\vsi_{j,j+1})=\si_{j,j+1}$. 
Similarly to \cite{Mizutani-Watanabe-J}, \cite{Mizutani-Watanabe-E}, 
we consider an octagon associated with $\bar{X}$ (Figure \ref{fig:octagon}). 
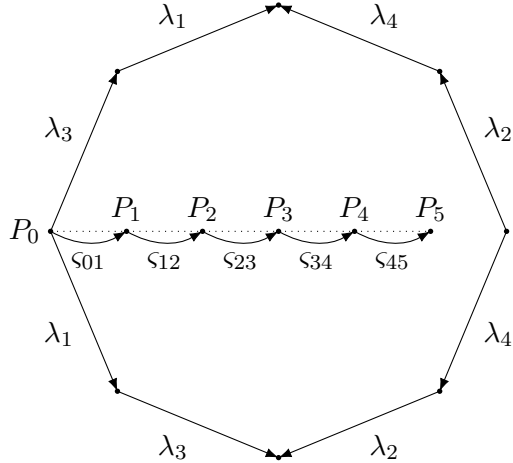
\begin{figure}[h]
  \centering
  \begin{tikzpicture}
    \coordinate (P00) at (3,0); 
    \coordinate (P01) at (2.12,2.12);
    \coordinate (P02) at (0,3);
    \coordinate (P03) at (-2.12,2.12);
    \coordinate[label=left:$P_0$] (P04) at (-3,0);
    \coordinate (P05) at (-2.12,-2.12);
    \coordinate (P06) at (0,-3);
    \coordinate (P07) at (2.12,-2.12);
    \coordinate[label=above:$P_1$] (P1) at (-2,0);
    \coordinate[label=above:$P_2$] (P2) at (-1,0);
    \coordinate[label=above:$P_3$] (P3) at (0,0);
    \coordinate[label=above:$P_4$] (P4) at (1,0);
    \coordinate[label=above:$P_5$] (P5) at (2,0);
    \foreach \P in {P00,P01,P02,P03,P04,P05,P06,P07,P1,P2,P3,P4,P5} \fill (\P) circle (1pt);
    \draw[->, >=Latex] (P00) -- node[above right]{$\lambda_2$} (P01);
    \draw[->, >=Latex] (P01) -- node[above right]{$\lambda_4$} (P02);
    \draw[->, >=Latex] (P03) -- node[above left]{$\lambda_1$} (P02);
    \draw[->, >=Latex] (P04) -- node[above left]{$\lambda_3$} (P03);
    \draw[->, >=Latex] (P04) -- node[below left]{$\lambda_1$} (P05);
    \draw[->, >=Latex] (P05) -- node[below left]{$\lambda_3$} (P06);
    \draw[->, >=Latex] (P07) -- node[below right]{$\lambda_2$} (P06);
    \draw[->, >=Latex] (P00) -- node[below right]{$\lambda_4$} (P07);
    \draw[dotted] (P04)--(P1) (P1)--(P2) (P2)--(P3) (P3)--(P4) (P4)--(P5);
    \draw[->, >=Latex] (P04) to [out = 330, in = 210] node[below]{$\varsigma_{01}$} (P1);
    \draw[->, >=Latex] (P1) to [out = 330, in = 210] node[below]{$\varsigma_{12}$} (P2);
    \draw[->, >=Latex] (P2) to [out = 330, in = 210] node[below]{$\varsigma_{23}$} (P3);
    \draw[->, >=Latex] (P3) to [out = 330, in = 210] node[below]{$\varsigma_{34}$} (P4);
    \draw[->, >=Latex] (P4) to [out = 330, in = 210] node[below]{$\varsigma_{45}$} (P5);
  \end{tikzpicture}
  \caption{Octagon associated with $\bar{X}$}\label{fig:octagon}
\end{figure}

In Figure \ref{fig:octagon}, the dotted line represents a branch cut of the multivalued function $T$. 
The cycles $\la_1,\dots, \la_4 \in H_1(\bar{X};\Z)$ do not define twisted cycles in $H_1(X;\CL)$, in general. 
By applying a suitable regularization, we can construct twisted cycles, which 
we also denote by $\la_j$ (see, for example, the construction of $l_0$ and $l_{\infty}$ in \cite{Mano-Watanabe}). 

In general, the twisted cycles defined in this section belong to neither 
$H_{(1)}$ nor $H_{(-1)}$. 
In the next section, we construct twisted cycles that lie in $H_{(1)}$ or $H_{(-1)}$.

\subsection{Eigenvectors}
In \cite{Mizutani-Watanabe-J}, \cite{Mizutani-Watanabe-E}, 
the integrals of holomorphic 1-forms on $\bar{X}$ over $\la_j$ are calculated. 
By suitably modifying the discussion, we can construct eigenvectors of $\iota_{*}$. 
\begin{Prop}[\cite{Mizutani-Watanabe-J}, \cite{Mizutani-Watanabe-E}]
  We set 
  \begin{align*}
    \la_{1,\pm}&:=\la_1 -(1\pm \bfe(c_0-2c_1-c_2))\vsi_{12}-(1\pm \bfe(c_0-2c_1-c_4))\vsi_{34},\\ 
    \la_{2,\pm}&:=\la_2 -(1\pm \bfe(c_0-2c_1-c_4))\vsi_{34},\\
    \la_{3,\pm}&:=\la_3 -(1\pm \bfe(c_0-c_1))\vsi_{01},\\
    \la_{4,\pm}&:=\la_4 -(1\pm \bfe(c_0-2c_1+c_3))\vsi_{23}.
  \end{align*}
  Then, we have 
  \begin{align}
    \label{eq:int-lambda-0}
    \int_{\la_{j,\pm}} T\vph_{\mp} =0 \quad (\forall\vph_{\mp} \in H^{(\mp 1)}),
  \end{align}
  and
  \begin{align}
    \label{eq:int-lambda-1}
    &\int_{\la_{1,\pm}}T \vph_{\pm}
      =\mp 2 \bfe(c_0-2c_1-c_2)\int_{\vsi_{12}}T \vph_{\pm} 
      \mp 2 \bfe(c_0-2c_1-c_4)\int_{\vsi_{34}}T \vph_{\pm} , \\
    \label{eq:int-lambda-2}
    &\int_{\la_{2,\pm}}T \vph_{\pm}
      =\mp 2 \bfe(c_0-2c_1-c_4)\int_{\vsi_{34}}T \vph_{\pm} , \\
    \label{eq:int-lambda-3}
    &\int_{\la_{3,\pm}}T \vph_{\pm}
      =\mp 2 \bfe(c_0-c_1)\int_{\vsi_{01}}T \vph_{\pm} ,\\
    \label{eq:int-lambda-4}
    &\int_{\la_{4,\pm}}T \vph_{\pm}
      =\mp 2 \bfe(c_0-2c_1+c_3)\int_{\vsi_{23}}T \vph_{\pm} , 
  \end{align}
  for any $\vph_{\pm} \in H^{(\pm 1)}$. 
\end{Prop}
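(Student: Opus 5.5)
The key point is that on each eigenspace $H^{(\pm1)}$ the integrand transforms in a controlled way under $\iota$. First I will show that every closed cycle $\la_j$ (after regularization) decomposes, at the level of period integrals, into a signed sum of lifts of the intervals $\vsi_{k,k+1}$ and their $\iota$-images. Then I will use $\int_{\iota_*\vsi}T\vph_\pm=\pm\int_{\vsi}T\vph_\pm$ to collapse each such sum.

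\textbf{Step 1: write each $\la_j$ in terms of sheet-lifts of $\si_{k,k+1}$.} Using the octagon of Figure~\ref{fig:octagon}, I will deform each $\la_j$ to a loop that runs along the branch cut. It goes from one ramification point $P_k$ to the next $P_{k+1}$ on the upper sheet and returns on the lower sheet, possibly traversing several consecutive segments. From the picture, the expected combinatorics are as follows:
\begin{itemize}
\item $\la_3$ encircles $[P_0,P_1]$;
\item $\la_4$ encircles $[P_2,P_3]$;
\item $\la_2$ encircles $[P_3,P_4]$;
\item $\la_1$ encircles $[P_1,P_2]\cup[P_3,P_4]$.
\end{itemize}
This matches the terms appearing in the formulas. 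As twisted cycles, such a loop equals $\vsi_{k,k+1}-\bfe(\theta)\,\iota_*\vsi_{k,k+1}$ plus an analogous term for the second segment. Here $\bfe(\theta)$ is the ratio of the branches of $T$ on the returning path to those on $\iota(\vsi_{k,k+1})$. This ratio comes from the local monodromies $\bfe(4c_j)$ halved: going half way around a branch point $P_j$ of the double cover multiplies $T$ by $\bfe(2c_j)$. It is also corrected by the branch conventions (lower-half-plane continuation from $(1,\infty)$). Bookkeeping these phases should give exactly $\bfe(c_0-c_1)$, $\bfe(c_0-2c_1-c_2)$, $\bfe(c_0-2c_1-c_4)$ and $\bfe(c_0-2c_1+c_3)$. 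This is the computation of Mizutani--Watanabe; I would redo it by tracking $\arg(x-z_j)$ along each path. The regularization terms near the $P_j$ cancel, since the loop is a closed regularized cycle.

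\textbf{Step 2: evaluate on eigenforms.} For $\vph_\pm\in H^{(\pm1)}$, invariance of the pairing under $\iota$ (the characterization of $\iota_*$ in Section~\ref{section:involution}) gives $\int_{\iota_*\vsi}T\vph_\pm=\pm\int_\vsi T\vph_\pm$. With the correctly normalized branch, this yields, for example,
\begin{align*}
\int_{\la_3}T\vph_\pm=\bigl(1\mp\bfe(c_0-c_1)\bigr)\int_{\vsi_{01}}T\vph_\pm .
\end{align*}
Subtracting $(1\pm\bfe(c_0-c_1))\int_{\vsi_{01}}T\vph$ then has two consequences:
\begin{itemize}
\item on $H^{(\mp1)}$, the coefficient $1\pm\bfe - (1\pm\bfe)$ vanishes, which gives \eqref{eq:int-lambda-0};
\item on $H^{(\pm1)}$, it leaves $\mp2\bfe(\cdot)\int_{\vsi_{01}}T\vph_\pm$, which gives \eqref{eq:int-lambda-3}.
\end{itemize}
The same computation, applied segment by segment, gives \eqref{eq:int-lambda-1}, \eqref{eq:int-lambda-2} and \eqref{eq:int-lambda-4}.

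\textbf{Step 3: conclude.} By Fact~\ref{fact:vanish-dim} and the perfectness of the period pairing, a cycle is determined by its periods. Identities among periods for all $\vph$ therefore suffice; no homology-level identity needs to be checked separately.

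\textbf{Main obstacle.} The hard step is the phase bookkeeping in Step~1. This means determining exactly which branch of $T$ the lower-sheet return path carries relative to $\iota(\vsi_{k,k+1})$, especially for $\la_1$ and $\la_2$, which cross the region near $P_4$ and $P_5$ where the reference branch is fixed. I would first compute the simplest case, $\la_3$ around $[P_0,P_1]$, directly from the lower-half-plane continuation. I would then obtain the others by composing half-turns around successive $P_j$, checking consistency against the theta-function exponents in \eqref{eq:theta-expression} as Mizutani--Watanabe do.
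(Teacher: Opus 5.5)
Your Step 2 is the paper's argument. The paper takes from Mizutani--Watanabe the splitting $\int_{\la_3}T\vph=\int_{\vsi_{01}}T\vph+\int_{-\iota(\vsi_{01})}T\vph$, valid for every $\vph$, and their evaluation $\int_{-\iota(\vsi_{01})}T\vph=\bfe(c_0-c_1)\int_{\vsi_{01}}T\vph$. It observes that this evaluation uses only $\iota^*\vph=-\vph$, redoes it with the opposite sign on $H^{(1)}$, and treats the other $\la_j$ the same way via their Steps 2--4. Your Step 3 is not needed, since the statement is already about periods.

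\textbf{The gap is Step 1.} You do not merely postpone the phase computation: the mechanism you propose for it cannot produce the phases in the statement. That mechanism is half-turn factors $\bfe(2c_j)$ plus branch normalisations, found by tracking $\arg(x-z_j)$.

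\emph{What continuation gives.} $T=\prod_{j=0}^4(x-z_j)^{2c_j}$ depends only on $x$, and $x(\iota p)=x(p)$. So continuing $T$ from $p\in\vsi_{k,k+1}$ to $\iota(p)$ along any path in $X$ multiplies it by $\bfe\bigl(2\sum_{j=0}^4 w_jc_j\bigr)$, where $w_j\in\Z$ are the winding numbers of the projected closed loop. For your loop around $[P_0,P_1]$ this factor is $\bfe(\pm 2c_1)$. This is also exactly the value for which the regularization circles at $P_1$ combine into an arc, as you assume they do. Renormalising the branch on $\la_j$ scales the coefficients of $\vsi_{k,k+1}$ and $\iota_*\vsi_{k,k+1}$ by the same factor, so it cannot change their ratio.

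\emph{What the statement requires.} By perfectness of the period pairing, the statement is equivalent to $\la_3=\vsi_{01}-\bfe(c_0-c_1)\iota_*\vsi_{01}$ in $H_1(X;\CL)$, and to the analogous identities for the other $\la_j$ (this is the paper's subsequent corollary). These force the ratios to be $\bfe(c_0-c_1)$, $\bfe(c_0-2c_1-c_2)$, $\bfe(c_0-2c_1-c_4)$ and $\bfe(c_0-2c_1+c_3)$.

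\emph{Why they cannot match.} The parameters $c_0,\dots,c_4$ are independent, and each of these exponents has coefficient $1$ at $c_0$, whereas $2\sum_j w_jc_j$ has an even coefficient there. So none of them equals $\pm\bfe\bigl(2\sum_j w_jc_j\bigr)$. Carrying out your bookkeeping honestly therefore produces a different identity, with $\bfe(\pm 2c_1)$ in place of $\bfe(c_0-c_1)$, and not the one claimed.

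In the paper these phases are not obtained by continuation at all. They are imported wholesale from Mizutani--Watanabe's computation with the theta expression (\ref{eq:theta-expression}), so they reflect the branch conventions attached to that expression. To prove the statement as written, you have to take that computation as input, as the paper does. It cannot be replaced by $\arg(x-z_j)$ tracking.
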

\begin{proof}
  In Step 1 of \cite{Mizutani-Watanabe-J}, \cite{Mizutani-Watanabe-E}, 
  it is shown that the equality
  \begin{align}
    \label{eq:int-lambda3-1}
    \int_{\la_3} T \vph 
    &= \int_{\vsi_{01}} T\vph +\int_{-\iota(\vsi_{01})} T\vph \\
    \label{eq:int-lambda3-2}
    &= \int_{\vsi_{01}} T\vph +\bfe(c_0-c_1)\int_{\vsi_{01}} T\vph 
  \end{align}
  holds for a holomorphic 1-form $\vph$ on $\bar{X}$, by using the theta expression (\ref{eq:theta-expression}) of $T$. 
  In fact, we see that 
  the equality (\ref{eq:int-lambda3-1}) holds for any $\vph \in H^1(X;\na)$, while 
  (\ref{eq:int-lambda3-2}) holds for $\vph \in H^{(-1)}$, 
  since the proof requires only the property $\iota^{*}\vph =-\vph$. 
  By modifying the proof, we can also show 
  \begin{align*}
    \int_{\la_3} T \vph 
    = \int_{\vsi_{01}} T\vph -\bfe(c_0-c_1)\int_{\vsi_{01}} T\vph 
  \end{align*}
  for $\vph \in H^{(1)}$. 
  Therefore, we obtain (\ref{eq:int-lambda-0}) for $j=3$ and (\ref{eq:int-lambda-3}). 
  The other relations can also be proved by modifying Steps 2--4 of \cite{Mizutani-Watanabe-J}, \cite{Mizutani-Watanabe-E}. 
\end{proof}

By this proposition, we obtain the following theorem. 
\begin{Th}\label{th:sigma+-}
  We set 
  \begin{align*}
    \vsi_{01,\pm}
    &:=\mp \frac{1}{2 \bfe(c_0-c_1)} \la_{3,\pm}
    =\mp \frac{1}{2 \bfe(c_0-c_1)}\big( \la_3 -(1\pm \bfe(c_0-c_1))\vsi_{01} \big) ,\\
    \vsi_{23,\pm}
    &:=\mp \frac{1}{2 \bfe(c_0-2c_1+c_3)} \la_{4,\pm} 
    =\mp \frac{1}{2 \bfe(c_0-2c_1+c_3)}\big( \la_4 -(1\pm \bfe(c_0-2c_1+c_3))\vsi_{23} \big) ,\\
    \vsi_{34,\pm}
    &:=\mp \frac{1}{2 \bfe(c_0-2c_1-c_4)} \la_{2,\pm}
    =\mp \frac{1}{2 \bfe(c_0-2c_1-c_4)}\big( \la_2 -(1\pm \bfe(c_0-2c_1-c_4))\vsi_{34} \big) ,\\
    \vsi_{12,\pm}
    &:=\mp \frac{1}{2 \bfe(c_0-2c_1-c_2)} \big( \la_{1,\pm} -\la_{2,\pm} \big) \\
    &=\mp \frac{1}{2 \bfe(c_0-2c_1-c_2)} \big( \la_1 -\la_2 -(1\pm \bfe(c_0-2c_1-c_2))\vsi_{12} \big) .
  \end{align*}
  Then, $\vsi_{j,j+1,\pm}$ is an element of $H_{(\pm 1)}$ that satisfies 
  \begin{align}
    \label{eq:cycle-integral}
    \int_{\vsi_{j,j+1,\pm}}T\vph_{\pm} =\int_{\vsi_{j,j+1}} T\vph_{\pm} \quad (\forall\vph_{\pm} \in H^{(\pm 1)}).
  \end{align}
\end{Th}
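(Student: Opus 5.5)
The plan is to read Theorem \ref{th:sigma+-} directly off the preceding Proposition. We combine it with the orthogonality characterization of the eigenspaces from Section \ref{section:involution} and with the nondegeneracy of the period pairing.

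\emph{Membership in the eigenspace.} Recall that $H_{(\mp1)}$ equals the annihilator of $H^{(\pm1)}$ under the period pairing, and symmetrically
\begin{align*}
  H_{(\pm 1)}=\Big\{ \si\in H_1(X;\CL) ~\Big| ~ \int_{\si} T \vph =0 \ (\forall \vph \in H^{(\mp 1)})  \Big\}.
\end{align*}
This holds because the period pairing $H^1(X;\na)\times H_1(X;\CL)\to\C$ is perfect and compatible with $\iota$: from $\int_{\iota_*\si}T\vph=\int_\si T\iota^*\vph$, the space $H^{(\mp1)}$ pairs trivially with $H_{(\pm1)}$, and a dimension count gives equality. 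By (\ref{eq:int-lambda-0}), each $\la_{j,\pm}$ annihilates $H^{(\mp1)}$, so $\la_{j,\pm}\in H_{(\pm1)}$. Since $H_{(\pm1)}$ is a linear subspace, the scalar multiples $\vsi_{01,\pm},\vsi_{23,\pm},\vsi_{34,\pm}$ and the combination $\vsi_{12,\pm}$, which is a multiple of $\la_{1,\pm}-\la_{2,\pm}$, all lie in $H_{(\pm1)}$. The second expression for $\vsi_{12,\pm}$ is just the subtraction of the defining formulas: the $\vsi_{34}$ terms cancel.

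\emph{The integral identity (\ref{eq:cycle-integral}).} Fix $\vph_\pm\in H^{(\pm1)}$. For $\vsi_{01,\pm}$, apply (\ref{eq:int-lambda-3}):
\begin{align*}
  \int_{\vsi_{01,\pm}}T\vph_\pm=\mp\frac{1}{2\bfe(c_0-c_1)}\cdot\Big(\mp2\bfe(c_0-c_1)\int_{\vsi_{01}}T\vph_\pm\Big)=\int_{\vsi_{01}}T\vph_\pm .
\end{align*}
The same computation with (\ref{eq:int-lambda-4}) handles $\vsi_{23,\pm}$, and with (\ref{eq:int-lambda-2}) it handles $\vsi_{34,\pm}$. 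For $\vsi_{12,\pm}$, subtract (\ref{eq:int-lambda-2}) from (\ref{eq:int-lambda-1}). The $\vsi_{34}$ contributions cancel exactly, leaving
\begin{align*}
  \mp2\bfe(c_0-2c_1-c_2)\int_{\vsi_{12}}T\vph_\pm ,
\end{align*}
and multiplying by the prefactor gives the claim.

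\emph{Main obstacle.} The only non-formal point is the characterization of $H_{(\pm1)}$ as an annihilator. This requires that the period pairing be nondegenerate, which holds under $4c_j\notin\Z$. It also requires the standard dimension argument, which the paper already asserts in Section \ref{section:involution}, so I will cite it rather than reprove it. A further subtlety is that the $\la_j$ are the regularized twisted cycles, so they genuinely define classes in $H_1(X;\CL)$; this is assumed from the construction.
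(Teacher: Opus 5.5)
Your proposal is correct and follows the paper's proof. Membership in $H_{(\pm1)}$ comes from (\ref{eq:int-lambda-0}) via the orthogonality of the eigenspace decomposition under the period pairing. The identity (\ref{eq:cycle-integral}) follows by multiplying (\ref{eq:int-lambda-1})--(\ref{eq:int-lambda-4}) by the normalizing constants, subtracting (\ref{eq:int-lambda-2}) from (\ref{eq:int-lambda-1}) for $\vsi_{12,\pm}$. Your dimension-count justification of the annihilator characterization just spells out what the paper takes as given in Section \ref{section:involution}.
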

\begin{proof}
  The equality (\ref{eq:int-lambda-0}) implies $\la_{j,\pm} \in H_{(\pm 1)}$. 
  By the equalities (\ref{eq:int-lambda-1})--(\ref{eq:int-lambda-4}), 
  we can show that the cycles $\vsi_{j,j+1,\pm}$ satisfy (\ref{eq:cycle-integral}). 
\end{proof}

\begin{Cor}
  We have 
  \begin{align*}
    &\iota_{*}(\vsi_{01})=-\frac{1}{\bfe(c_0-c_1)} \big( \la_3 -\vsi_{01} \big) ,&
    &\iota_{*}(\vsi_{23})=-\frac{1}{\bfe(c_0-2c_1+c_3)}\big( \la_4 -\vsi_{23} \big) ,& \\
    &\iota_{*}(\vsi_{34})=-\frac{1}{\bfe(c_0-2c_1-c_4)}\big( \la_2 -\vsi_{34} \big) ,&
    &\iota_{*}(\vsi_{12})=-\frac{1}{\bfe(c_0-2c_1-c_2)} \big( \la_1 -\la_2 -\vsi_{12} \big) .&
  \end{align*}
\end{Cor}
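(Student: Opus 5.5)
The plan is to write each $\vsi_{j,j+1}$ as the sum of its two $\iota_{*}$-eigencomponents, which Theorem \ref{th:sigma+-} already supplies. Since $\iota_{*}$ acts by $\pm 1$ on $H_{(\pm 1)}$, the image $\iota_{*}(\vsi_{j,j+1})$ is then the difference of these components.

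\emph{Step 1: the eigencomponents sum to $\vsi_{j,j+1}$.} I would check the identity
\begin{align*}
\vsi_{j,j+1}=\vsi_{j,j+1,+}+\vsi_{j,j+1,-}
\end{align*}
directly in $H_1(X;\CL)$ from the definitions. Take $j=0$ and write $e:=\bfe(c_0-c_1)$. Then
\begin{align*}
\vsi_{01,+}+\vsi_{01,-}
=\frac{1}{2e}\Big(-\big(\la_3-(1+e)\vsi_{01}\big)+\big(\la_3-(1-e)\vsi_{01}\big)\Big)
=\vsi_{01}.
\end{align*}
This is a formal identity between linear combinations of the same twisted cycles, so it holds on the nose. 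The same cancellation works for $\vsi_{23}$ with $\la_4$ and $\bfe(c_0-2c_1+c_3)$. It works for $\vsi_{34}$ with $\la_2$ and $\bfe(c_0-2c_1-c_4)$. It works for $\vsi_{12}$ with $\la_1-\la_2$ and $\bfe(c_0-2c_1-c_2)$.

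As a cross-check that does not depend on the bookkeeping, I would also verify the identity by duality. By (\ref{eq:cycle-integral}), the two sides have the same period against every element of $H^{(1)}$ and of $H^{(-1)}$. Here $\vsi_{j,j+1,\mp}$ pairs to zero with $H^{(\pm1)}$, by the orthogonality recalled in Section \ref{section:involution}. Since $H^1(X;\na)=H^{(1)}\op H^{(-1)}$ and the period pairing is perfect (Fact \ref{fact:vanish-dim}), the two cycles coincide.

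\emph{Step 2: apply $\iota_{*}$.} By Theorem \ref{th:sigma+-}, $\vsi_{j,j+1,\pm}\in H_{(\pm1)}$. Hence
\begin{align*}
\iota_{*}(\vsi_{j,j+1})=\vsi_{j,j+1,+}-\vsi_{j,j+1,-}.
\end{align*}
For $j=0$ this gives
\begin{align*}
\iota_{*}(\vsi_{01})
=-\frac{1}{2e}\Big(\big(\la_3-(1+e)\vsi_{01}\big)+\big(\la_3-(1-e)\vsi_{01}\big)\Big)
=-\frac{1}{e}\big(\la_3-\vsi_{01}\big),
\end{align*}
which is the claimed formula. The other three cases are identical after the substitutions listed in Step 1.

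There is no real obstacle here. The only point needing care is the $\vsi_{12}$ case, where the cycle $\la_1-\la_2$ plays the role of $\la_3$. One must use the form $\la_{1,\pm}-\la_{2,\pm}$ from Theorem \ref{th:sigma+-}. In that difference the $\vsi_{34}$-terms cancel, so the computation reduces to the same two-term cancellation as above.
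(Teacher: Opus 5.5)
Your proof is correct, and your route differs only mildly from the paper's. Both arguments rest on the same formal identity $\vsi_{01,\pm}=\frac{1}{2}(\vsi_{01}\pm\vsi_{01}')$, where $\vsi_{01}':=-\bfe(c_0-c_1)^{-1}(\la_3-\vsi_{01})$; the difference is which half of Theorem \ref{th:sigma+-} you invoke. You use the membership $\vsi_{01,\pm}\in H_{(\pm1)}$ and read off $\iota_{*}(\vsi_{01})=\vsi_{01,+}-\vsi_{01,-}$ by pure linear algebra. The paper instead uses only the period identity (\ref{eq:cycle-integral}) to show $\int_{\vsi_{01}'}T\vph_{\pm}=\pm\int_{\vsi_{01}}T\vph_{\pm}=\int_{\iota_{*}(\vsi_{01})}T\vph_{\pm}$, and then concludes $\vsi_{01}'=\iota_{*}(\vsi_{01})$ by nondegeneracy of the period pairing.

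Your main line therefore needs no duality step, only that $\iota_{*}$ acts by $\pm1$ on $H_{(\pm1)}$; its input traces back to the vanishing (\ref{eq:int-lambda-0}). The paper's route needs only the explicit integral formulas (\ref{eq:int-lambda-1})--(\ref{eq:int-lambda-4}) and shows directly how $\iota_{*}$ is detected by periods. Once the formal identity is in hand, the two inputs are interchangeable, and your duality cross-check is essentially the paper's argument. A minor point: perfectness of the period pairing is not what Fact \ref{fact:vanish-dim} asserts, though it is the standard fact the paper also uses implicitly.
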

\begin{proof}
  We prove only the first equality, since the others are shown in a similar manner. 
  We set $\vsi_{01}':=-\frac{1}{\bfe(c_0-c_1)} \big( \la_3 -\vsi_{01} \big)$. 
  Since $\vsi_{01,\pm}=\frac{1}{2}(\vsi_{01} \pm \vsi_{01}')$, we have
  \begin{align*}
    \int_{\vsi_{01}'}T\vph_{\pm}
    =\pm \left( 2 \int_{\vsi_{01,\pm}}T\vph_{\pm} -\int_{\vsi_{01}}T\vph_{\pm} \right)
    =\pm \int_{\vsi_{01}}T\vph_{\pm}
    =\int_{\vsi_{01}}T \cdot \iota^{*}(\vph_{\pm})
    =\int_{\iota_{*}(\vsi_{01})}T \vph_{\pm}
  \end{align*}
  for any $\vph_{\pm}\in H^{(\pm 1)}$. 
  Therefore, we obtain $\vsi_{01}'=\iota_{*}(\vsi_{01})$. 
\end{proof}

\begin{Rem}\label{rem:sigma+-involution}
  As mentioned in the above proof, we have 
  $\vsi_{j,j+1,\pm}=\frac{1}{2}(\vsi_{j,j+1} \pm \iota_{*}(\vsi_{j,j+1}))$. 
\end{Rem}

Finally, we evaluate the intersection matrix. 
\begin{Th}
  The intersection matrix $H$ of $\vsi_{01,+},\dots,\vsi_{34,+},\vsi_{01,-},\dots,\vsi_{34,-}$ is 
  \begin{align}
    \label{eq:homology-int-sigma+-}
    H =\frac{1}{2}
    \begin{pmatrix}
       H(1) & O \\ O & H(-1)
    \end{pmatrix}, 
  \end{align}
  where 
  \begin{align*}
    H(\pm 1) &=
  \begin{pmatrix}
    \frac{1-\bfe(2c_0+2c_1)}{(1\pm \bfe(2c_0))(1\pm \bfe(2c_1))}&\frac{1}{1\pm \bfe(2c_1)}&0&0 \\
    \frac{\pm \bfe(2c_1)}{1\pm \bfe(2c_1)}&\frac{1-\bfe(2c_1+2c_2)}{(1\pm \bfe(2c_1))(1\pm \bfe(2c_2))}
    &\frac{1}{1\pm \bfe(2c_2)}&0 \\
    0&\frac{\pm \bfe(2c_2)}{1\pm \bfe(2c_2)}
    &\frac{1-\bfe(2c_2+2c_3)}{(1\pm \bfe(2c_2))(1\pm \bfe(2c_3))}&\frac{1}{1\pm \bfe(2c_3)} \\
    0&0&\frac{\pm \bfe(2c_3)}{1\pm \bfe(2c_3)}&\frac{1-\bfe(2c_3+2c_4)}{(1\pm \bfe(2c_3))(1\pm \bfe(2c_4))} 
  \end{pmatrix}.
  \end{align*}
\end{Th}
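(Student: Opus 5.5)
The plan is to separate the $1$-eigenspace block from the $(-1)$-eigenspace block, and to reduce each block to an intersection matrix on $Y$ that is already known.

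\textbf{Off-diagonal blocks.} These vanish by the orthogonality recalled in Section \ref{section:involution}. The cycles $\vsi_{j,j+1,+}$ lie in $H_{(1)}$ by Theorem \ref{th:sigma+-}. For the dual cycles $\vsi_{k,k+1,-}^{\vee}$ (obtained by $c_j\mapsto -c_j$), we need that they lie in $H_{(-1)}^{\vee}$. Since $H_{(-1)}^{\vee}$ is orthogonal to $H_{(1)}$, the off-diagonal entries are $0$. The statement for the dual cycles follows by applying Theorem \ref{th:sigma+-} with the parameters replaced by $-c_j$. By Remark \ref{rem:sigma+-involution}, this replacement simply gives $\frac12(\vsi^{\vee}\pm\iota_*\vsi^{\vee})$.

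\textbf{The block $H(1)$.} I use Remark \ref{rem:sigma+-involution}, which gives $\pr_*(\vsi_{j,j+1,+})=\frac12(\si_{j,j+1}+\pr_*\iota_*\vsi_{j,j+1})=\si_{j,j+1}$, since $\pr\circ\iota=\pr$. The same holds for the dual cycles. Fact \ref{fact:pr-eigen} then gives
\[
\langle\vsi_{j,j+1,+},\vsi_{k,k+1,+}^{\vee}\rangle_{\h}=\tfrac12\langle\si_{j,j+1},\si_{k,k+1}^{\vee}\rangle_{\h,0}.
\]
This is $\frac12$ times (\ref{eq:int-mat-homology-Y}), which is exactly $\frac12H(1)$.

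\textbf{The block $H(-1)$.} I use (\ref{eq:homology-int-(-1)eigen}), so I need $\pr_*(\vsi_{j,j+1,-}\ot f)\in H_1(Y;\CL_{0,f})$. The element $\vsi_{j,j+1,-}\ot f$ lies in $H_{(1),f}$ by Fact \ref{fact:f-bai}. Writing it as $\frac12(\vsi_{j,j+1}\ot f-\iota_*\vsi_{j,j+1}\ot f)$, I compute how $\ot f$ interacts with $\iota_*$: on $\iota_*\vsi$ the loaded branch of $Tf$ acquires the factor $\iota^*f=-f$. Hence $\iota_*(\vsi)\ot f=-\iota_*(\vsi\ot f)$, and so $\vsi_{j,j+1,-}\ot f=\frac12(1+\iota_*)(\vsi_{j,j+1}\ot f)$. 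Pushing forward gives $\pr_*(\vsi_{j,j+1,-}\ot f)=\si'_{j,j+1}$. Here $\si'_{j,j+1}$ is the regularized path from $z_j$ to $z_{j+1}$ on $Y$ loaded with the branch of $Tf$, that is, $T_0$ with the exponents $c_j$ replaced by $c_j'$ from (\ref{eq:c-shift}). The same holds for the dual cycles with $f^{-1}$.

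It then remains to evaluate (\ref{eq:int-mat-homology-Y}) with $c_j$ replaced by $c_j'$. Since $\bfe(2c_j')=\bfe(2c_j\pm\frac12)=-\bfe(2c_j)$ for every $j$, each $\bfe(2c_j)$ becomes $-\bfe(2c_j)$. Meanwhile $\bfe(2c_j'+2c_{j+1}')=\bfe(2c_j+2c_{j+1})$, because the shifts $\pm\frac14$ appearing in (\ref{eq:c-shift}) for adjacent indices sum to $0$ or $\pm\frac12$ in a way that I will check index by index, giving a factor $\bfe(\pm 1)$ or $\bfe(0)$. This turns the matrix into $H(-1)$, and the factor $\frac12$ comes from (\ref{eq:homology-int-(-1)eigen}).

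\textbf{Main obstacle.} The delicate step is tracking branches. The regularized path $\si'_{j,j+1}$ must carry precisely the branch of $Tf$ matching the convention used for (\ref{eq:int-mat-homology-Y}), namely $\arg(t-z_j)=0$ on $(1,\infty)$ with continuation through the lower half-plane. Otherwise constant factors such as $\pm1$ or $\bfe(\cdot)$ would appear and rescale rows or columns. I would fix this by choosing the branch of $f=x(x-1)/y$ on the lift with $y>0$ over $(1,\infty)$, where $f>0$, and continuing through the lower half-plane. One also has to check that the resulting signs in the products $Tf$ and $(Tf)^{-1}$ cancel in each pairing.
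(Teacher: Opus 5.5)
Your proposal is correct and is essentially the paper's argument: orthogonality of the eigenspaces for the off-diagonal blocks, Fact \ref{fact:pr-eigen} for the $+$ block, and (\ref{eq:homology-int-(-1)eigen}) with the shift (\ref{eq:c-shift}) for the $-$ block; your computation of $\pr_*(\vsi_{j,j+1,-}\ot f)$ and your lower-half-plane branch check for $f$ make explicit a step the paper only asserts. Note that the argument produces (\ref{eq:int-mat-homology-Y}) with $\bfe(2c_j)$ replaced by $\pm\bfe(2c_j)$, i.e.\ denominators $1\mp\bfe(2c_j)$, so the $1\pm\bfe(2c_j)$ printed in $H(\pm 1)$ is a sign slip in the statement, not a discrepancy in your computation.
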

\begin{proof}
  Similarly to the proof of Theorem \ref{th:cohomology-intersection}, 
  we can show that $H$ has the form of (\ref{eq:homology-int-sigma+-}) and 
  $H(1)$ coincides with the intersection matrix (\ref{eq:int-mat-homology-Y}) of 
  $\si_{01},\dots ,\si_{34}$. 
  By (\ref{eq:homology-int-(-1)eigen}), we have 
  \begin{align*}
    \langle \vsi_{j,j+1,-},\vsi_{k,k+1,-} \rangle_{\h}
    &=\langle \vsi_{j,j+1,-}\ot f,\vsi_{k,k+1,-}\ot f^{-1} \rangle_{\h,f} \\
    &=\frac{1}{2} \langle \pr_{*}(\vsi_{j,j+1,-}\ot f), \pr_{*}(\vsi_{k,k+1,-}\ot f^{-1})\rangle_{\h,0,f} ,
  \end{align*}
  and the last intersection number can be obtained from $\langle \si_{j,j+1}, \si_{k,k+1}\rangle_{\h,0}$
  by replacing $(c_0,\dots ,c_5)$ with $(c_0',\dots ,c_5')$ in (\ref{eq:c-shift}).
  Thus, $H(-1)$ coincides with the matrix obtained from $H(1)$ by replacing $\bfe(2c_j)$ with $-\bfe(2c_j)$. 
\end{proof}
The determinants of the matrices $H(\pm 1)$ are 
\begin{align*}
  \frac{(1 \pm \bfe(-2c_5))}
  {(1 \pm \bfe(2c_0))(1 \pm \bfe(2c_1))(1 \pm \bfe(2c_2))(1 \pm \bfe(2c_3))(1 \pm \bfe(2c_4))}, 
\end{align*}
from which we obtain the following corollary. 
\begin{Cor}
  The twisted cycles $\vsi_{01,\pm},\vsi_{12,\pm},\vsi_{23,\pm},\vsi_{34,\pm}$ form a basis of $H_{(\pm 1)}$. 
  In particular, $\vsi_{01,+},\dots,\vsi_{34,+}$, $\vsi_{01,-},\dots,\vsi_{34,-}$ form a basis of $H_1(X;\CL)$.
\end{Cor}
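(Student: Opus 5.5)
The plan is to read off linear independence from the nonvanishing of the determinants of $H(\pm 1)$ just computed, and then count dimensions.

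First, I show that each $H(\pm1)$ is nonsingular under our assumption $4c_j\notin\Z$. The determinant of $H(\pm 1)$ has numerator $1\pm\bfe(-2c_5)$. Since $4c_5\notin\Z$, we have $\bfe(-2c_5)\neq\pm1$: indeed $\bfe(-2c_5)=\pm1$ would force $4c_5\in\Z$. Likewise each denominator factor $1\pm\bfe(2c_j)$ is nonzero, so every entry of $H(\pm1)$ is well defined and the determinant is finite and nonzero.

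Next comes the independence argument. By the preceding theorem, the intersection matrix of $\vsi_{01,\pm},\dots,\vsi_{34,\pm}$ against the dual cycles $\vsi_{01,\pm}^{\vee},\dots,\vsi_{34,\pm}^{\vee}$ (built by $c_j\mapsto -c_j$) is $\frac12 H(\pm1)$, which is nonsingular. Suppose a linear relation $\sum a_k\vsi_{k,\pm}=0$ holds. Pairing it with each dual cycle gives a homogeneous linear system whose coefficient matrix is $\tp{H(\pm1)}$ up to the factor $\frac12$. Hence all $a_k=0$. The same reasoning applied to the full block-diagonal matrix $H$ shows that all eight cycles are linearly independent in $H_1(X;\CL)$.

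Then I do the dimension count. Theorem \ref{th:sigma+-} gives $\vsi_{j,j+1,\pm}\in H_{(\pm1)}$. Fact \ref{fact:pr-eigen} gives $H_{(1)}\simeq H_1(Y;\CL_0)$, which has dimension $4$ by Fact \ref{fact:vanish-dim}. Since $\dim H_1(X;\CL)=8$, it follows that $\dim H_{(-1)}=4$. Four independent vectors in each 4-dimensional eigenspace therefore form a basis, and the union forms a basis of $H_1(X;\CL)=H_{(1)}\op H_{(-1)}$.

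The only delicate point is a bookkeeping one: the dual cycles must actually be the elements that $H$ is computed against, and they must lie in $H^{\vee}_{(\pm1)}$. This follows by the same construction with $c_j\mapsto-c_j$, so I expect no real obstacle. Even without the dimension count, nondegeneracy of the block-diagonal pairing on 8 vectors in an 8-dimensional space already suffices.
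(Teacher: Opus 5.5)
Your proposal is correct and follows the paper's argument. The paper deduces the corollary directly from the nonvanishing of $\det H(\pm 1)$: the numerator $1\pm\bfe(-2c_5)\neq 0$ and the factors $1\pm\bfe(2c_j)$ are nonzero because $4c_j\notin\Z$, so the intersection pairing on these cycles is nondegenerate, and this is combined with $\dim H_1(X;\CL)=8$ and $\dim H_{(\pm1)}=4$. You have simply written out the linear-independence and dimension-count steps that the paper leaves implicit.
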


\subsection{Linear relations}
We introduce some linear relations in $H_1(X;\CL)$, and 
reinterpret the relations given in \cite{Mizutani-Watanabe-J}, \cite{Mizutani-Watanabe-E} within our framework. 
\begin{Cor}
  The twisted cycles $\vsi_{01},\vsi_{12},\vsi_{23},\vsi_{34}$, $\la_{1},\la_{2},\la_{3},\la_{4}$
  are expressed by $\vsi_{01,+},\dots,\vsi_{34,+}$, $\vsi_{01,-},\dots,\vsi_{34,-}$ as follows: 
  \begin{align}
    \label{eq:sigma-sigma+-}
    &\vsi_{j,j+1} = \vsi_{j,j+1,+}+ \vsi_{j,j+1,-}   \qquad (j=0,1,2,3),\\
    \nonumber
    &\la_1 =(1- \bfe(c_0-2c_1-c_2))\vsi_{12,+} +(1+ \bfe(c_0-2c_1-c_2))\vsi_{12,-}\\
    \label{eq:lambda1-sigma+-}
    &\qquad 
    +(1- \bfe(c_0-2c_1-c_4))\vsi_{34,+} +(1+ \bfe(c_0-2c_1-c_4))\vsi_{34,-}, \\
    \label{eq:lambda2-sigma+-}
    &\la_2 =(1- \bfe(c_0-2c_1-c_4))\vsi_{34,+} +(1+ \bfe(c_0-2c_1-c_4))\vsi_{34,-}, \\
    \label{eq:lambda3-sigma+-}
    &\la_3 = (1- \bfe(c_0-c_1))\vsi_{01,+} +(1+ \bfe(c_0-c_1))\vsi_{01,-},\\
    \label{eq:lambda4-sigma+-}
    &\la_4 = (1- \bfe(c_0-2c_1+c_3))\vsi_{23,+} +(1+ \bfe(c_0-2c_1+c_3))\vsi_{23,-}.
  \end{align}
  Further, $\vsi_{01},\vsi_{12},\vsi_{23},\vsi_{34}$, $\la_{1},\la_{2},\la_{3},\la_{4}$ also form 
  a basis of $H_1(X;\CL)$. 
\end{Cor}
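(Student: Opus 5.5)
The plan is to read off all eight relations directly from the definitions in Theorem \ref{th:sigma+-}, and then to deduce the basis statement by an invertibility argument. In the last step I use that $\vsi_{01,\pm},\dots,\vsi_{34,\pm}$ form a basis of $H_1(X;\CL)$, which is the corollary following the homology intersection matrix.

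For (\ref{eq:sigma-sigma+-}), Remark \ref{rem:sigma+-involution} gives $\vsi_{j,j+1,\pm}=\frac{1}{2}(\vsi_{j,j+1}\pm\iota_*(\vsi_{j,j+1}))$. Adding the two signs yields $\vsi_{j,j+1}$ immediately.

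For the $\la$'s, I use the defining formula. Take $\la_3$ as the model case and write $e=\bfe(c_0-c_1)$. Theorem \ref{th:sigma+-} gives
\[
\la_3=\mp 2e\,\vsi_{01,\pm}+(1\pm e)\vsi_{01}
\]
for each sign. Averaging the $+$ and $-$ versions gives
\[
\la_3=-e\,\vsi_{01,+}+e\,\vsi_{01,-}+\vsi_{01}.
\]
Substituting $\vsi_{01}=\vsi_{01,+}+\vsi_{01,-}$ gives $\la_3=(1-e)\vsi_{01,+}+(1+e)\vsi_{01,-}$, which is (\ref{eq:lambda3-sigma+-}). The same computation proves (\ref{eq:lambda2-sigma+-}) from the definition of $\vsi_{34,\pm}$ and (\ref{eq:lambda4-sigma+-}) from that of $\vsi_{23,\pm}$. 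For $\la_1$, the definition of $\vsi_{12,\pm}$ expresses $\la_1-\la_2$ in the identical form with $e=\bfe(c_0-2c_1-c_2)$. Adding the already established expression (\ref{eq:lambda2-sigma+-}) for $\la_2$ then gives (\ref{eq:lambda1-sigma+-}).

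For the basis claim, there are eight elements in an 8-dimensional space (Fact \ref{fact:vanish-dim}), so it suffices that the transition matrix to the basis $\vsi_{01,\pm},\dots,\vsi_{34,\pm}$ is invertible. Each of $\vsi_{01},\la_3$ involves only $\vsi_{01,\pm}$, and each of $\vsi_{23},\la_4$ involves only $\vsi_{23,\pm}$. After replacing $\la_1$ by $\la_1-\la_2$, the pairs $\vsi_{34},\la_2$ and $\vsi_{12},\la_1-\la_2$ likewise involve only $\vsi_{34,\pm}$ and $\vsi_{12,\pm}$ respectively. The matrix is therefore block diagonal up to unimodular row operations, with $2\times2$ blocks
\[
\begin{pmatrix}1&1\\ 1-e&1+e\end{pmatrix},
\]
each of determinant $2e\neq 0$ because $e$ is an exponential. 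Hence the matrix is invertible.

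The only potential obstacle is bookkeeping of sign conventions in $\mp$ and the composite definition of $\vsi_{12,\pm}$ via $\la_{1,\pm}-\la_{2,\pm}$; otherwise everything is immediate.
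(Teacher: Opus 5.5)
Your proposal is correct and follows the paper's proof. You obtain (\ref{eq:sigma-sigma+-}) from Remark \ref{rem:sigma+-involution}, the $\la$-relations by unwinding the definitions in Theorem \ref{th:sigma+-}, and the basis claim from invertibility of the transition matrix to $\vsi_{01,\pm},\dots,\vsi_{34,\pm}$. Your block decomposition, after replacing $\la_1$ by $\la_1-\la_2$, simply makes explicit the paper's determinant $-2^4\bfe(4c_0-7c_1-c_2+c_3-c_4)$: this equals, up to sign, the product of your four block determinants $2\bfe(\cdot)$.
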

\begin{proof}
  The equality (\ref{eq:sigma-sigma+-}) follows from Remark \ref{rem:sigma+-involution}. 
  The equalities (\ref{eq:lambda1-sigma+-})--(\ref{eq:lambda4-sigma+-}) follow from 
  the definitions of $\vsi_{j,j+1,\pm}$ in Theorem \ref{th:sigma+-}. 
  It is easy to see that the matrix $Q$ satisfying 
  \begin{align*}
    (\vsi_{01},\vsi_{12},\vsi_{23},\vsi_{34},\la_{1},\la_{2},\la_{3},\la_{4})
    =(\vsi_{01,+},\dots,\vsi_{34,+}, \vsi_{01,-},\dots,\vsi_{34,-})Q ,
  \end{align*}
  has determinant $-2^4 \cdot \bfe(4c_0 -7c_1 -c_2+c_3-c_4)\neq 0$. 
  This implies that $\vsi_{01},\vsi_{12},\vsi_{23},\vsi_{34}$, $\la_{1},\la_{2},\la_{3},\la_{4}$ form 
  a basis. 
\end{proof}

\begin{Prop}[\cite{Mizutani-Watanabe-J}, \cite{Mizutani-Watanabe-E}]
  We have a relation in $H_1(X;\CL)$: 
  \begin{align*}
    &(1-\bfe(c_5-c_1))\la_1 
    +(\bfe(-c_0-2c_1+c_3)-\bfe(c_5-c_1))\la_2 \\
    &\qquad 
    +(\bfe(c_0+c_5)-\bfe(-c_0-c_1))\la_3
    +(\bfe(-c_0-2c_1-c_4)-\bfe(-c_0-2c_1-c_2))\la_4 \\
    &=(1-\bfe(-4c_0))\vsi_{01}
      +(1-\bfe(-4(c_0+c_1)))\vsi_{12}
      +(1-\bfe(-4(c_0+c_1+c_2)))\vsi_{23} \\
    &\qquad 
      +(1-\bfe(-4(c_0+c_1+c_2+c_3)))\vsi_{34}
      +(1-\bfe(4c_5))\vsi_{45}
 \end{align*}
\end{Prop}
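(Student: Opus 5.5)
The plan is to test the relation against the period pairing and reduce it to known relations on $Y$. Since $H_1(X;\CL)=H_{(1)}\op H_{(-1)}$, and the period pairing $H^1(X;\na)\times H_1(X;\CL)\to\C$ is perfect, it suffices to check one thing: for every $\vph_+\in H^{(1)}$ and every $\vph_-\in H^{(-1)}$, the integrals of $T\vph_\pm$ over the two sides agree. For the left-hand side I will apply (\ref{eq:int-lambda-1})--(\ref{eq:int-lambda-4}) to each $\la_k$, using
\begin{align*}
\int_{\la_{k}}T\vph_{\pm}=\int_{\la_{k,\pm}}T\vph_\pm+\int_{\la_{k,\pm}'}T\vph_\pm ,
\end{align*}
where $\la_{k,\pm}'$ denotes the $\vsi$-part, i.e.\ $\la_k=\la_{k,\pm}+\la'_{k,\pm}$. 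Equivalently, I will use the expressions (\ref{eq:lambda1-sigma+-})--(\ref{eq:lambda4-sigma+-}) together with (\ref{eq:cycle-integral}). This turns the left-hand side, paired with $\vph_\pm$, into a linear combination $\sum_{j=0}^{3}A_j^{\pm}\int_{\vsi_{j,j+1}}T\vph_\pm$. The coefficients $A_j^\pm$ are explicit polynomials in the exponentials; the $+$ and $-$ cases differ only by the sign in front of the $\bfe(\cdots)$ terms.

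Next I transport everything to $Y$. For $\vph_+=\pr^*\phi$ we have $\int_{\vsi_{j,j+1}}T\vph_+=\int_{\si_{j,j+1}}T_0\phi$, because $\pr_*\vsi_{j,j+1}=\si_{j,j+1}$ and this also holds for $j=4$. For $\vph_-=f\pr^*\phi$ with $\phi\in H^1(Y;\na_{0,f})$, the analogous identity via Fact \ref{fact:f-bai} reads $\int_{\vsi_{j,j+1}}T\vph_-=\int_{\si^{f}_{j,j+1}}T_0f\,\phi$. Here $\si^f_{j,j+1}$ is the regularized interval cycle for the local system $\CL_{0,f}$, whose exponents are given by (\ref{eq:c-shift}), so that $\bfe(2c_j')=-\bfe(2c_j)$ up to the branch convention fixed on $(1,\infty)$. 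The claim then reduces to the two relations
\begin{align*}
\sum_{j=0}^{3}\big(A_j^{\pm}-(1-\bfe(-4(c_0+\dots+c_j)))\big)\,\si^{(\pm)}_{j,j+1}=(1-\bfe(4c_5))\,\si^{(\pm)}_{45}
\end{align*}
in $H_1(Y;\CL_0)$ and $H_1(Y;\CL_{0,f})$, respectively, where $\si^{(+)}=\si$ and $\si^{(-)}=\si^f$. On $\P^1$ with five interval cycles there is exactly one linear relation. It comes from the boundary of the upper (or lower) half-plane loaded with $T_0$:
\begin{align*}
\sum_{j=0}^{4}\bfe\Big(\pm\sum_{k\le j}\text{(exponents)}\Big)\si_{j,j+1}=0 ,
\end{align*}
possibly in a normalized form $\sum_j(1-\bfe(\cdots))\si_{j,j+1}=0$ obtained by subtracting the two half-plane relations. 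I will write it down with the branch conventions of \S2.4, i.e.\ continuation through the lower half-plane, and then replace $\bfe(2c_j)$ by $-\bfe(2c_j)$ for the $f$-twisted case.

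Finally I compare coefficients. I expect the factors $1-\bfe(-4(\cdots))$ on the right-hand side to arise as products of the form $(1-\bfe(-2s))(1+\bfe(-2s))$, or as sums of the $+$ and $-$ half-plane relations. The reason is that the monodromy of $T$ on $X$ is $\bfe(4c_j)$, the square of the monodromy $\bfe(2c_j)$ on $Y$. The main obstacle is the bookkeeping of branches: matching the phases $\bfe(c_0-c_1)$, $\bfe(c_0-2c_1+c_3)$, etc.\ coming from the theta-function normalization of $\la_k$ with the phases in the half-plane relation on $Y$, and fixing the correct sign of the shift $\pm\frac14$ in each $c_j'$. If the direct coefficient comparison becomes unwieldy, my fallback is to verify the identity only after pairing both sides, via $\langle\bu,\bu\rangle_{\h}$, with the dual basis $\vsi^{\vee}_{j,j+1,\pm}$. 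Then Theorem on $H$ reduces everything to $4\times4$ matrix identities involving $H(\pm1)$ and the known intersection numbers of $\si_{45}$ on $Y$.
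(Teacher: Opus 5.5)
Your reduction is logically sound, and it is a genuinely different route from the paper, which simply takes the relation from Mizutani--Watanabe's cycle-level computation. Because the period pairing is perfect and $H^1(X;\na)=H^{(1)}\op H^{(-1)}$, the relation is equivalent to its pairings with the two eigenspaces. These pairings become identities on $Y$ (for $\CL_0$ and $\CL_{0,f}$), where the five interval cycles satisfy exactly one relation. Note that each half-plane relation also contains the arc from $\infty$ to $0$. Eliminating it, in the paper's lower-half-plane convention one gets
\begin{align*}
  \sum_{j=0}^{3}\big(1-\bfe(-2s_j)\big)\si_{j,j+1}+\big(1-\bfe(2c_5)\big)\si_{45}=0,
  \qquad s_j:=c_0+\dots+c_j .
\end{align*}
The gap is that the whole proof rests on the coefficient comparison you leave as an expectation, and that comparison does not come out.

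In the $+$ sector, multiply this relation by $1+\bfe(2c_5)$ and insert it into your reduced identity. This forces the coefficient of $\int_{\vsi_{01}}T\vph_+$ contributed by the $\la_k$ to be $(1-\bfe(-2c_0))(\bfe(-2c_0)-\bfe(2c_5))$. But (\ref{eq:lambda3-sigma+-}) gives $(\bfe(c_0+c_5)-\bfe(-c_0-c_1))(1-\bfe(c_0-c_1))$, which depends on $c_1$. Rescaling the $\vsi_{j,j+1}$ by phases cannot repair this. After subtracting $1-\bfe(-4c_0)$, one side is a sum of six distinct exponentials in the free parameters $c_0,\dots,c_4$, and the other is a monomial times a sum of four. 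Since $\si_{01},\dots,\si_{34}$ is a basis, your scheme, fed with the inputs you chose, yields a contradiction rather than the proposition. Your fallback through $H(\pm1)$ uses the same inputs and fails in the same way.

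The trouble lies in the imported phases, not in your framework. By (\ref{eq:int-lambda3-1}), $\la_3=\vsi_{01}-\iota(\vsi_{01})$, where $\iota(\vsi_{01})$ carries some branch of $T=\pr^{*}T_0$. Its ratio $\kappa$ to the pulled-back branch is a monodromy of $T_0$, hence $\kappa=\bfe(2\sum_j n_jc_j)$ with $n_j\in\Z$, and $\int_{\la_3}T\vph_+=(1-\kappa)\int_{\vsi_{01}}T\vph_+$. The printed value $\kappa=\bfe(c_0-c_1)$ cannot be a ratio of branches. On the hyperplane $\sum_j c_j=0$ it would force $(1,-1,0,0,0,0)-2(n_0,\dots,n_5)$ to be a constant vector, which fails modulo $2$.

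So an argument along your lines must first compute $\pr_*\la_k$ and $\pr_*(\la_k\ot f)$ directly from the topology of the loops $\la_k$: which branch points they encircle, and on which sheet and with which branch they come back. That means redoing the octagon computation the paper outsources to Mizutani--Watanabe. Equivalently, you can obtain a relation as the boundary of the octagon cut along $P_0P_1\cdots P_5$, whose two banks produce the factors $1-\bfe(-4s_j)$ and $1-\bfe(4c_5)$. Only then can you compare with the statement. What you call bookkeeping is exactly the missing argument, and with the phases as printed it does not close.
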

\begin{proof}
  This proposition follows 
  from Proposition 2 and (11) of \cite{Mizutani-Watanabe-J}, \cite{Mizutani-Watanabe-E}. 
  Although the authors in \cite{Mizutani-Watanabe-J}, \cite{Mizutani-Watanabe-E} treat the integrals of 
  holomorphic 1-forms on $\bar{X}$, their calculations yield a relation between twisted cycles. 
\end{proof}

By considering the period pairings with $\vph_{\pm} \in H^{(\pm 1)}$, 
we obtain linear relations among the integrals. 
\begin{Cor}
  For $\vph_{\pm} \in H^{(\pm 1)}$, we have 
  \begin{align*}
    &(1-\bfe(4c_5))\int_{\vsi_{45}}  T\vph_{\pm} \\
    &=\Big( (\bfe(c_0+c_5)-\bfe(-c_0-c_1))(1\mp \bfe(c_0-c_1)) -(1-\bfe(-4c_0))\Big) \int_{\vsi_{01}}T\vph_{\pm} \\
    &+\Big( (1-\bfe(c_5-c_1))(1\mp \bfe(c_0-2c_1-c_2)) -(1-\bfe(-4(c_0+c_1)))\Big) \int_{\vsi_{12}}T\vph_{\pm} \\
    &+\Big( (\bfe(-c_0-2c_1-c_4)-\bfe(-c_0-2c_1-c_2))(1\mp \bfe(c_0-2c_1+c_3))
      -(1-\bfe(-4(c_0+c_1+c_2))))\Big) \int_{\vsi_{23}}T\vph_{\pm} \\
    &+\Big( (1+\bfe(-c_0-2c_1+c_3)-2\bfe(c_5-c_1))(1\mp \bfe(c_0-2c_1-c_4)) 
      -(1-\bfe(-4(c_0+c_1+c_2+c_3))))\Big) \int_{\vsi_{34}}T\vph_{\pm} .
  \end{align*}
\end{Cor}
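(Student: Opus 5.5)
The plan is to pair the relation of the preceding Proposition with an arbitrary $\vph_{\pm}\in H^{(\pm 1)}$ and then eliminate $\la_1,\dots,\la_4$ using the eigen-decompositions already established. Integrating $T\vph_{\pm}$ over both sides of the Proposition gives
\begin{align*}
  &(1-\bfe(c_5-c_1))\int_{\la_1}T\vph_{\pm}
  +(\bfe(-c_0-2c_1+c_3)-\bfe(c_5-c_1))\int_{\la_2}T\vph_{\pm}\\
  &\qquad+(\bfe(c_0+c_5)-\bfe(-c_0-c_1))\int_{\la_3}T\vph_{\pm}
  +(\bfe(-c_0-2c_1-c_4)-\bfe(-c_0-2c_1-c_2))\int_{\la_4}T\vph_{\pm}\\
  &=\sum_{j=0}^{3}(1-\bfe(-4(c_0+\dots+c_j)))\int_{\vsi_{j,j+1}}T\vph_{\pm}
  +(1-\bfe(4c_5))\int_{\vsi_{45}}T\vph_{\pm}.
\end{align*}

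Next I express each $\int_{\la_k}T\vph_{\pm}$ through integrals over the $\vsi_{j,j+1}$. The cleanest route is the previous Corollary, which writes $\la_k$ in terms of $\vsi_{j,j+1,\pm}$. By the orthogonality of the eigenspaces, $\int_{\vsi_{j,j+1,\mp}}T\vph_{\pm}=0$. By (\ref{eq:cycle-integral}), $\int_{\vsi_{j,j+1,\pm}}T\vph_{\pm}=\int_{\vsi_{j,j+1}}T\vph_{\pm}$. Substituting, only the $\pm$-part survives, and this yields
\begin{align*}
  \int_{\la_3}T\vph_{\pm}&=(1\mp\bfe(c_0-c_1))\int_{\vsi_{01}}T\vph_{\pm},\\
  \int_{\la_4}T\vph_{\pm}&=(1\mp\bfe(c_0-2c_1+c_3))\int_{\vsi_{23}}T\vph_{\pm},\\
  \int_{\la_2}T\vph_{\pm}&=(1\mp\bfe(c_0-2c_1-c_4))\int_{\vsi_{34}}T\vph_{\pm},
\end{align*}
with $\int_{\la_1}T\vph_{\pm}$ equal to the sum of the $\vsi_{12}$ term $(1\mp\bfe(c_0-2c_1-c_2))\int_{\vsi_{12}}T\vph_{\pm}$ and the same $\vsi_{34}$ term as for $\la_2$. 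The same formulas also follow directly from (\ref{eq:int-lambda-1})--(\ref{eq:int-lambda-4}) together with the definitions of $\la_{j,\pm}$.

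Finally, I substitute these into the paired relation and move the $\vsi_{01},\dots,\vsi_{34}$ terms from the right-hand side to the left, isolating $(1-\bfe(4c_5))\int_{\vsi_{45}}T\vph_{\pm}$. Grouping by cycle gives the following coefficients:
\begin{itemize}
\item $\vsi_{01}$ receives only the $\la_3$ contribution, namely $(\bfe(c_0+c_5)-\bfe(-c_0-c_1))(1\mp\bfe(c_0-c_1))$.
\item $\vsi_{12}$ receives only the $\la_1$ contribution, namely $(1-\bfe(c_5-c_1))(1\mp\bfe(c_0-2c_1-c_2))$.
\item $\vsi_{23}$ receives only the $\la_4$ contribution.
\item $\vsi_{34}$ receives contributions from both $\la_1$ and $\la_2$, each multiplied by $(1\mp\bfe(c_0-2c_1-c_4))$.
\end{itemize}
From each of these I subtract the corresponding coefficient $1-\bfe(-4(c_0+\dots+c_j))$ from the right-hand side.

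The main obstacle is only the $\vsi_{34}$ coefficient. The $\la_1$ and $\la_2$ coefficients add to $1+\bfe(-c_0-2c_1+c_3)-2\bfe(c_5-c_1)$, which matches the stated formula. Everything else is direct bookkeeping of signs, in particular the $\mp$ coming from the eigenvalue and the convention $\bfe(\cdot)$.
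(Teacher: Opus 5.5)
Your proposal is correct and follows the same route the paper indicates: pair the relation of the preceding Proposition with $\vph_{\pm}$, and rewrite each $\int_{\la_k}T\vph_{\pm}$ via the expansion of $\la_k$ in the $\vsi_{j,j+1,\pm}$. There, the $\mp$-components vanish by orthogonality and the $\pm$-components reduce to $\int_{\vsi_{j,j+1}}T\vph_{\pm}$ by (\ref{eq:cycle-integral}). Your bookkeeping, including the combined $\vsi_{34}$ coefficient $1+\bfe(-c_0-2c_1+c_3)-2\bfe(c_5-c_1)$ from $\la_1$ and $\la_2$, checks out.
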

\begin{Rem}
  If $\vph$ is a holomorphic 1-form on $\bar{X}$, then $\vph \in H^{(-1)}$, and 
  the above relation coincides with the Main Theorem of \cite{Mizutani-Watanabe-J}, \cite{Mizutani-Watanabe-E}\footnote{
    The last relation in \cite{Mizutani-Watanabe-J}, \cite{Mizutani-Watanabe-E} is not correct. 
    The right-hand side should be multiplied by $-1$. 
  }.
\end{Rem}

\begin{Ack}
  The author is grateful to Professor Humihiko Watanabe 
  for helpful discussions. 
  This work was supported by JSPS KAKENHI Grant Number JP24K06680.
\end{Ack}


\begin{thebibliography}{99}
\bibitem{AK}
  K. Aomoto and M. Kita, 
  ``Theory of Hypergeometric Functions'', 
  translated by K. Iohara, 
  Springer Monographs in Mathematics,
  Springer-Verlag, Tokyo, 2011. 

\bibitem{CM}
  K. Cho and K. Matsumoto, 
  Intersection theory for twisted cohomologies and 
  twisted Riemann's period relations. I, 
  \textit{Nagoya Math. J.} \textbf{139} (1995), 67--86.

\bibitem{Erdelyi}
  A. Erd\'elyi, 
  ``Higher transcendental functions, II'',
  McGraw-Hill Book Co., Inc., New York-Toronto-London, 1953.

  
\bibitem{G-RW-intersection}
  Y. Goto, 
  Intersection numbers of twisted homology and cohomology groups associated to the Riemann-Wirtinger integral, 
  \textit{Internat. J. Math.} \textbf{34} (2023), no. 03, 2350005, 32 pp.

\bibitem{G-Shibukawa}
  Y. Goto and G. Shibukawa, 
  Notes on twisted period relations for the Wirtinger integral; 
  arXiv:2511.17016. 

\bibitem{G-Matsubara-Mitsui}
  Y. Goto, S.-J. Matsubara-Heo and K. Mitsui, 
  Rapid decay homology intersection numbers of GKZ systems,
  in preparation. 



\bibitem{KY}
  M. Kita and M. Yoshida, 
  Intersection theory for twisted cycles,   
  \textit{Math. Nachr.} \textbf{166} (1994), 287--304. 



\bibitem{Mano-Watanabe}
  T. Mano and H. Watanabe, 
  Twisted cohomology and homology groups associated to the Riemann-Wirtinger integral,
  \textit{Proc. Amer. Math. Soc.} \textbf{140} (2012), no. 11, 3867--3881.  

\bibitem{M-k-form}
  K. Matsumoto, 
  Intersection numbers for logarithmic $k$-forms, 
  \textit{Osaka J. Math.} \textbf{35} (1998), 873--893. 

\bibitem{Mizutani-Watanabe-J}
  Y. Mizutani and H. Watanabe, 
  On a homogeneous linear relation satisfied by integrals on a Riemann surface of genus two, 
  \textit{Bouei Daigakko Rikougaku Kenkyuuhoukoku}, \textbf{58} (2) (2020), 23--30 (Japanese).

\bibitem{Mizutani-Watanabe-E}
  Y. Mizutani and H. Watanabe, 
  On a homogeneous linear relation satisfied by integrals on a Riemann surface of genus 2, 
  preprint. 


\bibitem{Pokraka-Ren-Rodriguez}
  A. Pokraka, L. Ren and C. Rodriguez, 
  A double copy from twisted (co)homology at genus $g$; 
  arXiv:2509.01598. 

\bibitem{Watanabe-elliptic-homology-cohomology}
  H. Watanabe, 
  Twisted homology and cohomology groups associated to the Wirtinger integral, 
  \textit{J. Math. Soc. Japan}, \textbf{59} (2007), no. 4, 1067--1080. 

\bibitem{Watanabe-wirtinger-diff-eq}
  H. Watanabe, 
  Linear differential relations satisfied by Wirtinger integrals, 
  \textit{Hokkaido Math. J.}, \textbf{38} (2009), no. 1, 83--95. 

\bibitem{Watanabe-punctured}
  H. Watanabe, 
  Twisted cohomology of a punctured Riemann surface, 
  \textit{Kumamoto J. Math.} \textbf{29} (2016), 55--63. 

\bibitem{Wirtinger}
  W. Wirtinger, 
  Zur Darstellung der hypergeometrischen Funktion durch bestimmte Integrale, 
  \textit{Akad. Wiss. Wien. S.-B. IIa} \textbf{111} (1902), 849--900. 

\end{thebibliography}
\end{document}